\theoremstyle{plain}
\newtheorem{thm}{Theorem}[section]
\newtheorem{prop}{Proposition}[section]
\newtheorem{lem}[prop]{Lemma}
\newtheorem{defi}[prop]{Definition}
\newtheorem{rmk}[prop]{Remark}
\newtheorem*{proposition*}{Proposition}
\numberwithin{equation}{section}
\newcommand {\p} {\partial}
\def\div{\text{div}}
\title[Simultaneous recovery in Mean Field Games system ]{Simultaneously recovering running cost and Hamiltonian in Mean Field Games system}
\author[H. Liu]{Hongyu Liu}
\address{Department of Mathematics, City University of Hong Kong, Kowloon, Hong Kong SAR, China}
\email{hongyu.liuip@gmail.com, hongyliu@cityu.edu.hk}
\author[S. Zhang]{Shen Zhang}
\address{Department of Mathematics, City University of Hong Kong, Kowloon, Hong Kong SAR, China}
\email{szhang347-c@my.cityu.edu.hk}
\begin{document}
	\maketitle
	
	\begin{abstract}
	We propose and study several inverse problems for the mean field games (MFG) system in a bounded domain. Our focus is on simultaneously recovering the running cost and the Hamiltonian within the MFG system by the associated boundary observation. There are several technical novelties that make the study intriguing and challenging. First, the MFG system couples two nonlinear parabolic PDEs with one moving forward and the other one moving backward in time. Second, there is a probability density constraint on the population distribution of the agents. Third, the simultaneous recovery of two coupling factors within the MFG system is technically far from being trivial. Fourth, we consider both cases that the running cost depends on the population density locally and non-locally, and the two cases present different technical challenges for the inverse problem study. We develop two mathematical strategies that can ensure the probability constraint as well as effectively tackle the inverse problems, which are respectively termed as high-order variation and successive linearisation. In particular, the high-order variation method is new to the literature, which demonstrates a novel concept to examine the inverse problems by non-negative inputs only. We believe the methods developed can find applications to inverse problems in other contexts. 
	
\medskip

\noindent{\bf Keywords:}~~mean field games, inverse boundary problem, running cost, Hamiltonian, simultaneous recovery, high-order variation, probability density

\medskip
\noindent{\bf 2010 Mathematics Subject Classification:}~~49N80, 91A16, 35R30, 65H30	
	
%
	\end{abstract}

	\tableofcontents
	\section{Introduction}
	
\subsection{Mathematical setup}

Initially focusing on the mathematics, but not the physical and practical context, we present the MFG (Mean Field Game) system for our study. Let $\mathbb{R}^n$ be the Euclidean space with $n\in\mathbb{N}$. It is emphasised that $n$ can be any integer greater than or equal to 1. Let $x\in\mathbb{R}^n$ denote the state variable and $t\in [0, \infty)$ be the time variable. Let $\Omega\subset\mathbb{R}^n$ be a bounded Lipschitz domain and $\nu$ be the exterior unit normal to $\partial\Omega$. 
We consider the following MFG system: 
	\begin{equation}\label{main}
		\left\{
		\begin{array}{ll}
			\displaystyle{-\partial_t u(x,t) -\beta\Delta u(t,x)+H(x, m, \nabla u)= F(x,m)} &  {\rm{in}}\ \ Q,\medskip\\
			\displaystyle{\partial_tm(x,t)-\beta\Delta m(x,t)-{\rm div} \big(m(x,t) \nabla_p H(x, m, \nabla u) \big)=0} & {\rm{in}}\ \ Q,\medskip\\
			\p_{\nu} m(x,t)=0,\quad mH_p(x,m,\nabla u)\cdot\nu(x)=0 & {\rm{on}}\ \ \Sigma,\medskip\\
			u(x,T)=\psi(x),\ m(x,0)=m_0(x) & {\rm{in}}\ \ \Omega,\medskip
		\end{array}
		\right.
	\end{equation}
	where $T\in\mathbb{R}_+$ signifies the terminal time, $Q:=\overline{\Omega}\times[0,T]$, and $\Sigma:=\p\Omega\times[0,T]$. In \eqref{main}, $\beta$ is a non-negative constant; $u(x, t)$, $m(x, t)$, $\psi(x), m_0(x)$ are all real-valued functions; and $F(x, m): \mathbb{R}\times\mathbb{R}\mapsto\mathbb{R}$ and $H(x, m, p):\mathbb{R}\times\mathbb{R}\times\mathbb{R}^n\mapsto\mathbb{R}$ with $p:=\nabla u$ are also real-valued functions. Hence, the first equation in \eqref{main} is understood to be backward in time, whereas the second one to be forward in time. Moreover, $m$ in the second equation in \eqref{main} has to be understood to be a probability density in the following sense.

	 Let $\mathcal{P}$ stand for the set of Borel probability measures on $\mathbb{R}^n$, and  $\mathcal{P}(\Omega)$ stand for the set of Borel probability measures on $\Omega$. It is required that $m(\cdot, t)\in \mathcal{P}(\Omega)$ for any given $t\in [0, T]$, and in particular this means that $m_0(\cdot)=m(\cdot, 0)\in\mathcal{P}(\Omega)$. In the function setting, we can define
	\begin{equation}\label{eq:distr1}
		\mathcal{O}:=\{ \mathfrak{m}:\Omega\to [0,\infty) \ \ |\ \ \int_{\Omega} \mathfrak{m}(x)\, dx =a \leq1 \text{ for some } a\in (0,1] \}.
	\end{equation}
The probability density constraint introduced above is equivalent to requiring that $m_0(\cdot)\in \mathcal{O}$ and $m(\cdot, t)\in \mathcal{O}$ for any subsequent $t\leq T$. Indeed, from the second equation in \eqref{main} and using the homogeneous Neumann boundary condition of $m$, one can easily infer that 
\begin{equation}\label{eq:fact1}
\frac{d}{dt}\int_{\Omega} m(x, t)\, dx=0, 
\end{equation}
and hence 
\begin{equation}\label{eq:fact2}
\int_{\Omega} m(x, t)\, dx=\int_{\Omega} m_0(x)\, dx\quad\mbox{for any $t\leq T$}. 
\end{equation}
Nevertheless, in addition to the fact in \eqref{eq:fact1} and \eqref{eq:fact2}, the probability density constraint requires that $m(x,t)$ should be non-negative for all $(x, t)\in Q$, and in particular $m_0(x)$ is nonnegative for all $x\in\overline{\Omega}$. It is also noted that in general, one might want to require $a=1$ in \eqref{eq:distr1}, and we refer to \cite{LMZ} for related discussion on the practical motivation of relaxing the requirement to be $0<a\leq 1$. In fact, we shall provide more discussion on this aspect in what follows. 

We shall discuss the well-posedness of the MFG system \eqref{main} in what follows, especially in Section~\ref{section wp} for certain specific setups of our inverse problem study. Now, assuming the well-posedness of the MFG system \eqref{main}, we introduce the following boundary measurement map:
\begin{equation}\label{eq:meop0}
		\mathcal{N}_{F, H}(m_0,\psi)=(u(x,t)\Big|_{\Sigma}, u(x,0)), 
	\end{equation}
	where
	$(u, m)$ is the (unique) pair of solutions to the MFG system \eqref{main} associated with the initial population distribution $m(x, 0)=m_0(x)$ and the terminal cost $u(x,T)=\psi(x)$. That is, $\mathcal{N}_{F,H}$ sends the pair of inputs consisting of the initial population and terminal cost to the observation dataset $(u|_\Sigma, u(x, 0))$. Clearly, $\mathcal{N}_{F, H}$ encodes all the lateral Cauchy data as well as the initial data of $u$ in \eqref{main}. $u(x,0)$ signifies the so-called total cost of the mean field game described by \eqref{main}; see our discussion in what follows. In this article, we are mainly concerned with the following inverse problem:
	\begin{equation}\label{eq:ip1}
		\mathcal{N}_{F,H}\rightarrow (F, H). 
	\end{equation}	
That is, by knowing the boundary measurement operator $\mathcal{N}_{F, H}$ associated with the MFG system \eqref{main} with certain unknown $F$ and $H$, one intends to recover the unknowns. For the inverse problem \eqref{eq:ip1}, we shall mainly study the unique identifiability issue by establishing the following one-to-one correspondence:
\begin{equation}\label{eq:issue1}
\mathcal{N}_{F_1, H_1}=\mathcal{N}_{F_2, H_2}\quad\mbox{if and only if}\quad (F_1, H_1)\equiv (F_2, H_2), 
\end{equation}
where $(F_j, H_j)$, $j=1, 2$, are two sets of target configurations.

\subsection{Background and motivation}

Mean Field Games (MFGs) are non-atomic differential games which offer quantitative modelling of the macroscopic behaviours of a large number of symmetric agents seeking to minimise a specific cost. Caines-Huang-Malhame \cite{HCM06} and Lasry-Lions \cite{Lions} independently pioneered the MFG theory in 2006, and it has since then received significant attention and increasing studies in the literature. One of the main characteristics of an MFG is the existence of an adversarial regime, and the Nash equilibrium exists and is unique in this so-called monotone regime. In general, we focus on the case that the function $H(x,m,p)$ is convex with respect to the third variable $p$. This condition implies that the first equation in $\eqref{main}$ is associated with
an optimal control problem. $u$ is the value function associated with a typical small player and $m$ is the population density, which respectively fulfill the Hamilton-Jacobi-Bellman (HBJ) and Fokker-Planck-Kolmogorov (FPK) equations, i.e. the first and second equations in \eqref{main}. These two equations are coupled nonlinearly in a manner with the time evolution forward in one equation and backward in the other one.
In such a non-atomic differential game, an average agent can control the following stochastic differential equation
\begin{equation}
	d X_t=\alpha_tdt+\sqrt{2\beta}dB_t,
\end{equation}
 where $B_t$ is a standard Brownian motion and $\alpha_t$ belongs to the set of admissible controls. One may choose it to be the set of all functions that are Borel measurable, and uniformly Lipschitz continuous in the space variable. The corresponding optimal control problem is that the agent aims to minimizing the following  expectation
\begin{equation}
	J(x;m_t,\alpha)=\mathbb{E}\Big[\int_0^T L(X_s,m_s,\alpha_s) + F(X_s,m_s) ds+ G(X_t,m_T)\Big],
\end{equation}
where $L$ is the Fenchel conjugate of $H$ with respect to the third variable. If the equilibrium exists, the agent's optimal control is given by $\alpha^*=-D_pH(x,m,\nabla u)$. Since we consider non-atomic differential games, all agents argue in this way in the equilibrium. This fact leads the diffusion of agents under the drift term $-D_pH(x,m,\nabla u)$. It is described by the  FPK equation.

 In the physical setup, $H$ is the Hamiltonian of the MFG system and $F$ is the running cost that characterises the gaming strategies/rules among the players. $m_0$ is the initial population and $\psi$ is the terminal cost. As mentioned earlier, $u(x, 0)$ signifies the total cost which can be known at the end of the game. The constant $\beta$ signifies the volatility corresponding to the idiosyncratic noise of the small players. Here, we also note that the homogeneous Neumann boundary conditions, namely $\partial_\nu u=\partial_\nu m=0$ on $\Sigma$, signify that the player cannot leave the domain $\Omega$, and will be reflected back to the domain when meeting the boundary. Here, we would like to remark one practical rationale behind our current study that in \eqref{eq:distr1}, we shall not require that $a=1$, though $m$ is treated as a probability density. In fact, one can consider the scenario that the MFG domain consists of a family of disjoint subdomains, say $D_j$, $j\in\mathbb{N}$, such that the overall population on $\cup_j D_j$ is 1, namely $\int_{\cup_j D_j} m=1$. Though those subdomains are disjoint, the agents within each subdomain can interact with those in other subdomains, say e.g. via internet. Hence if $\Omega$ is taken to be any one of those subdomains, i.e. $D_j$, it is not necessary to require that $\int_\Omega m=1$. That is, $a$ in \eqref{eq:distr1} can be any number in $(0, 1]$, as long as $m$ is required to be nonnegative. We also refer to \cite{LMZ} for more related discussion on the practical motivation of relaxing this requirement. Such a relaxation enables us to simultaneously recover the Hamiltonian $H$ and the running cost $F$, which we shall discuss again in what follows.  
 
 The well-posedness of the MFG system \eqref{main} is well understood in different contexts and still remains an active subject of many ongoing studies. The first results date back to the original works of Lasry and Lions and have been presented in Lions \cite{Lions} and see also Caines-Huang-Malhame \cite{HCM06}. One can take into account both non-local and local dependences on the measure $m$ for $F$ and $G$. In the case of non-local data $F$ and $G$, its well-posedness is known in \cite{Cardaliaguet,CarPor,CarDel-I}. In the case that $F,G$ are locally dependent on the measure variable $m$, we refer to\cite{Amb:18,Car,CarGra,CirGiaMan,GomPimSan:15,Por,FerGomTad}.
  We call this the forward problem in our study. On the other hand, the inverse problems for MFGs are far less studied in the literature. To our best knowledge, there are only several numerical results available in \cite{CFLNO,DOY}. In \cite{LMZ}, the authors investigated inverse problems for an MFG system with unknown running cost and total cost, and established the unique identifiability results. It is pointed out in \cite{LMZ} that the probability density constraint on $m$ \eqref{eq:distr1} significantly increases the difficulty of the inverse problem study, since one would need to construct suitable ``probing modes" which fulfil this constraint. In \cite{MFG2}, to overcome this difficulty, the authors proposed an effective method for dealing with the inverse problem, which takes into account the high-order linearisation in the probability space around a nontrivial uniform distribution. Moreover, this method can be applied to the situation where the running cost varies with respect to $t$ by the construction of certain so-called CGO (Complex-Geometric-Optics) solutions. However, the method in \cite{MFG2} cannot be extended to dealing with the case of recovering more than one unknowns within the MFG system, say e.g. both the running cost and the Hamiltonian considered in the current article. In the next subsection, we shall provide more related discussions. 
 
 In Table~\ref{table:1}, we provide a brief summary of different studies in the literature on MFG inverse problems which make use of measurements associated with infinitely many events. We would also like to mention the recent works \cite{ILY,ILY2,carleman1,KlAv,KLL1,KLL2,LY} on MFGs from the perspective of inverse problems associated with a single-event measurement. Due to the reduction of the data usage, one can infer less information within the MFG system compared to those studies listed in Table~\ref{table:1}. 
 
\begin{table}[htp]
\caption{Summary of different studies in the literature on MFG inverse problems}
\begin{center}
\begin{tabular}{|p{2cm}|p{4cm}|p{4cm}|p{4cm}|}
     	\hline Literature  & Unknowns for Recovery & Features of the Unknowns & Mathematical Strategies and Restrictions\\
     	\hline   \cite{LMZ} & The running cost $F$ and the terminal cost $\psi$.  & $F$ and $\psi$ belong to certain analytic classes. & The probability measure constraint was not treated; the argument is based on first-order variation and high-order linearisation.\\
     	\hline   \cite{MFG2} & The running cost $F$. & $F$ belongs to an analytic class. & The argument is based on first-order variation and high-order linearisation around a nontrivial uniform distribution. It is difficult to recover more than one unknowns within the MFG system. \\ 
     	\hline    \cite{DOY} & The running cost $F$ and the Hamiltonian $H$. & $F$ is in the form of an integral, namely $F$ depends on $m$ non-locally. $H$ is kinetic. & Numerical results only with no theoretical justification. \\ 
     	\hline   The current article & The running cost $F$ and the Hamiltonian & $F$ is in an analytic class or in the form of an integral. Hence, $F$ may depend on $m$ locally and non-locally. $H$ is kinetic. & The argument is based on high-order variation and high-order linearisation. The probability density constraint is treated but with a certain relaxation. One can recover both $F$ and $H$ simultaneously.  \\
     	\hline
     \end{tabular}
\end{center}
\label{table:1}
\end{table}%

\subsection{Discussion on technical developments}	

According to our earlier discussion, there are several technical novelties that makes the study of the inverse problem \eqref{eq:ip1}--\eqref{eq:issue1} associated with the MFG system \eqref{main} highly intriguing and challenging, at least from a mathematical viewpoint. First, in the forward system \eqref{main}, the HBJ and FPK equations are coupled nonlinearly and backward-forward in time. In order to tackle the nonlinearity, a powerful strategy is based on the linearisation. In fact, the so-called successive/high-order linearisation technique has been extensively developed for a variety of inverse problems associated with nonlinear PDEs; see e.g. \cite{KLU,LLLZ} and the references cited therein. Most of those studies are concerned with a single-type of nonlinear PDE, and in \cite{LMZ} the method was developed for a MFG system which couples two nonlinear PDEs. It is pointed out that the period property of the MFG system considered in \cite{LMZ} contributes to resolving the probability density constraint. In \cite{MFG2}, the successive linearisation technique was further developed by taking into account the probability density constraint. Among several technical developments, one of the key ideas in \cite{MFG2} is to do the linearisation around a uniform distribution of the following form
\begin{equation}\label{eq:sl1}
m=\frac{1}{|\Omega|}+\delta_m\quad\mbox{with}\ \ \int_\Omega\delta_m=0. 
\end{equation}
Though the linearisation technique works to ensure the probability density constraint and at the same time to tackle the inverse problem, the resulting linearised MFG system is still a coupled one and moreover there is an average-zero constraint on the first-order variation $\delta_m$. Hence, only the running cost $F$ is recovered in \cite{MFG2} and moreover, it is required that $F$ depends locally on $m$. Nevertheless, we also note that in \cite{MFG2}, in addition to $x$ and $m$, $F$ can depend explicitly on $t$, i.e. the running cost can be of function of the form $F(x, t, m)$, whereas in the current article, we have to assume that the running cost is a function of the form $F(x, m)$. Hence, in the context of MFG inverse problems, two natural questions arise. Can we find a better way to describe the probability measure constraint on $m$? Is it possible that we can understand the space spanned by positive solutions (probability measure is non-negative at least) of a specific PDE system?

In this paper, we develop a novel approach to ensure the probability measure constraint on $m$ while effectively tackle the MFG inverse problems.
We term this approach as high-order variation in combination with the successive/high-order linearisation. This approach is particularly powerful in tackling the case that $F$ depends on $m$ non-locally. Next, we briefly explain the benefits of this strategy since it offers a novel concept to examine the inverse problems by nonnegative inputs only, which is a topic that is rarely touched in the mathematical theory of inverse problems. In fact, by allowing the input to have high-order variations with respect to an asymptotic parameter, 
this approach allows us to primarily concentrate on the second-order linearised system but not the first-order one.  Moreover, the input of the first-order linearisation system is forced to be nonnegative, and hence the input of the second-order linearization system can be arbitrary. In doing so, we can ensure the probability measure constraint, and moreover we can consider the linearization system of the MFG system around a pair of trivial solutions, which is the main reason for us to relax the requirement \eqref{eq:distr1} from $a=1$ to $a\in [0, 1]$. This allows us to simultaneously recover the Hamiltonian and the running cost, and moreover the running cost can depend on $m$ locally or non-locally. 

The two questions that we raised before are not possessed exclusively by the MFG inverse problems. In fact, in a recent article \cite{Lo}, a list of coupled or non-coupled PDEs was discussed where the probability density constraint naturally occurs. Inverse boundary problems associated with those PDEs are proposed and studied, where a high-order variation and high-order/successive linearisation scheme was also developed. We believe  the mathematical strategies and techniques developed in the current article as well as those in \cite{Lo} offer novel perspectives on inverse boundary problems in those new and intriguing contexts which have high potential to produce more results of both theoretical and practical importance. 

The rest of the paper is organized as follows. In Section 2, we collect some preliminary results and present the main results of the inverse problem. Section 3 is devoted to the study of the forward MFG system. In Section 4, we develop the high-order variation and high-order/successive linearisation methods, and in Section 5, we give the proofs of the main theorems. In Section 6, we compare and discuss the high-order variation method and the high-order linearisation method. In particular, it shows the necessity of high-order variation method to ensure the probability density constraint. 

	\section{Preliminaries and statement of main results}
	For $k\in\mathbb{N}$ and $0<\alpha<1$, the H\"older space $C^{k+\alpha}(\overline{\Omega})$ is defined as the subspace of $C^{k}(\overline{\Omega})$ such that $\phi\in C^{k+\alpha}(\overline{\Omega})$ if and only if $D^l\phi$ exist and are H\"older continuous with exponent $\alpha$ for all $l=(l_1,l_2,\ldots,l_n)\in \mathbb{N}^n$ with $|l|\leq k$, where $D^l:=\partial_{x_1}^{l_1}\partial_{x_2}^{l_2}\cdots\partial_{x_n}^{l_n}$ for $x=(x_1, x_2,\ldots, x_n)$. The norm is defined as
	\begin{equation}
		\|\phi\|_{C^{k+\alpha}(\overline{\Omega}) }:=\sum_{|l|\leq k}\|D^l\phi\|_{\infty}+\sum_{|l|=k}\sup_{x\neq y}\frac{|D^l\phi(x)-D^l\phi(y)|}{|x-y|^{\alpha}}.
	\end{equation}
	If the function $\phi$ depends on both the time and space variables, we define $\phi\in C^{k+\alpha, \frac{k+\alpha}{2}}(Q)$ if $D^lD^{j}_t\phi$ exist and are H\"older continuous with exponent $\alpha$ in $x$ and $\frac{k+\alpha}{2} $ in $t$ for all  $l\in \mathbb{N}^n$, $j\in\mathbb{N}$ with $|l|+2j\leq k.$ The norm is defined as
	\begin{equation}
		\begin{aligned}
			\|\phi\|_{ C^{k+\alpha, \frac{k+\alpha}{2}}(Q)}:&=\sum_{|l|+2j\leq k}\|D^lD^j_t\phi\|_{\infty}+\sum_{|l|+2j= k}\sup_{t,x\neq y}\frac{|\phi(x,t)-\phi(y,t)|}{|x-y|^{\alpha}}\\
			&+\sum_{|l|+2j= k}\sup_{t\neq t',x} \frac{|\phi(x,t)-\phi(x,t')|}{|t-t'|^{\alpha/2}}.
		\end{aligned}
	\end{equation}
	
\subsection{Admissible class}\label{assumption}
Now we introduce the admissible classes of the running costs $F$ in two different cases. The first one is close to the conditions in \cite{LMZ}. For the completeness of this paper, we list it here.
\begin{defi}\label{Admissible class2}
	We say $U(x,z):\mathbb{R}^n\times\mathbb{C}\to\mathbb{C}$ is admissible, denoted by $U\in\mathcal{A}$, if it satisfies the following conditions:
	\begin{enumerate}
		\item[(i)] The map $z\mapsto U(\cdot,z)$ is holomorphic with value in $C^{\alpha}(\mathbb{R}^n)$ for some $\alpha\in(0,1)$;
		\item[(ii)] $U(x,0)=0$ for all $x\in\mathbb{R}^n$. 
	\end{enumerate} 	
	Clearly, if (1) and (2) are fulfilled, then $U$ can be expanded into a power series as follows:
	\begin{equation}\label{eq:G}
		U(x,z)=\sum_{k=1}^{\infty} U^{(k)}(x)\frac{(z)^k}{k!},
	\end{equation}
	where $ U^{(k)}(x)=\frac{\p^kU}{\p z^k}(x,0)\in C^{\alpha}(\mathbb{R}^n).$
\end{defi}
Clearly, if $F(x,m)\in\mathcal{A}$ and $m$ is the density of the measure, then $F$ depends on the measure locally. 

Next, we consider the non-local case.
\begin{defi}\label{admi 2}
	Let $m(x,t)$ be the density of a given distribution. We say $$F(x,m)=\int_{\Omega} K(x,y)m(y,t)dy$$ belongs to $\mathcal{B}$ if 
		\begin{enumerate}
		\item[(i)] $K(x,y)$ is smooth in $\Omega\times\Omega$;
		\item[(ii)] $\int_{\Omega} K(x,y)dy=0 $ for all $x\in\Omega$. 
	\end{enumerate} 	
\end{defi}
\begin{rmk}
	In fact, the condition (ii) in the Definition $\ref{admi 2}$ is quite natural from the MFG point of view. If $m_0(x)=1$ and $\psi(x)$ is a constant $c$, the solution of MFG system should be $(u,m)=(c,1)$. This is because MFGs are non-atomic differential games. In other words, if $m_0(x)$ is the uniform distribution and $\psi(x)$ is constant, it is already an equilibrium of the system, and hence $m(x,t)$ should keep to be $1$. This is a common nature of the MFG system that the uniform distribution is a stable state. 
\end{rmk}

Throughout the rest of the paper, we assume that the Hamiltonian in \eqref{main} is of the following form:
\begin{equation}\label{eq:ham1}
H(x,p)=\frac 1 2\kappa(x)|p|^2, \quad p=\nabla u(x, t), 
\end{equation}
which signifies that the Lagrangian energy of the MFG system is kinetic (cf. \cite{LMZ}). Here, $\kappa(x)$ is usually assumed to be a non-negative-valued function, but we shall not need this assumption as long as it is a real-valued function. In this case, the boundary condition is just $\p_{\nu} u(x,t)=\p_{\nu}m(x,t)=0$. This type of Hamiltonian widely occurs in the MFG theory (cf. \cite{DOY} ). Hence, in \eqref{eq:ip1}, recovering $H$ is equivalently to recovering the function $\kappa(x)$.

\subsection{Main unique identifiability results}
We are able to articulate the primary conclusions for the inverse problems, which show that
one can recover  the running cost and Hamiltonian from the measurement map $\mathcal{N}_{F,H}$. 

\begin{thm}\label{der F}
       Assume that  $F_j(x,m)\in\mathcal{A} $. Let $\mathcal{N}_{F_j,H_j}$, $j=1,2$, be the measurement map associated to
	the following system:
	\begin{equation}\label{eq:mfg1}
		\begin{cases}
			-\p_tu(x,t)-\beta\Delta u(x,t)+\frac 1 2 \kappa_j{|\nabla u(x,t)|^2}= F_j(x,m),& \text{ in }  Q,\medskip\\
			\p_t m(x,t)-\beta\Delta m(x,t)-{\rm div} (m(x,t) \kappa_j\nabla u(x,t))=0,&\text{ in } Q,\medskip\\
			\p_{\nu} u(x,t)=\p_{\nu}m(x,t)=0      &\text{ on } \Sigma,\medskip\\
			u(x,T)=\psi(x), & \text{ in } \Omega,\medskip\\
			m(x,0)=m_0(x), & \text{ in } \Omega.\\
		\end{cases}  		
	\end{equation}	
	If for any $(m_0,\psi)\in [ C^{2+\alpha}(\Omega) \cap \mathcal{O}]\times C^{2+\alpha}(\Omega) $, where $\mathcal{O}$ is defined in \eqref{eq:distr1}, one has 
	$$\mathcal{N}_{F_1,H_1}(m_0,\psi)=\mathcal{N}_{F_2,H_2}(m_0,\psi),$$    then it holds that 
	$$\kappa_1=\kappa_2\ \text{  in  }\ \Omega,$$ and 
	$$F_1(x,z)=F_2(x,z)\ \text{  in  }\ \Omega\times \mathbb{R}.$$ 
\end{thm}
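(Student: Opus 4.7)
My plan is to combine a high-order variation of the inputs with a successive linearisation scheme. Concretely, I would feed the system with
\begin{equation*}
m_0(x) = \epsilon f_1(x) + \epsilon^2 f_2(x),\qquad \psi(x) = \epsilon \psi_1(x) + \epsilon^2 \psi_2(x),
\end{equation*}
where $\epsilon>0$ is a small parameter and $f_j, \psi_j \in C^{2+\alpha}(\overline{\Omega})$, with the constraint $f_1\ge 0$ so that $m_0\in\mathcal{O}$ for all small $\epsilon$, while $f_2, \psi_1, \psi_2$ remain arbitrary (possibly sign-changing). By the forward well-posedness discussed in Section~3, the solutions admit asymptotic expansions $(u_j, m_j)=\sum_{k\ge 1}\epsilon^k (u_j^{(k)}, m_j^{(k)})$, and the identity $\mathcal{N}_{F_1,H_1}=\mathcal{N}_{F_2,H_2}$ forces, order by order,
\begin{equation*}
u_1^{(k)}\big|_\Sigma = u_2^{(k)}\big|_\Sigma, \qquad u_1^{(k)}(\cdot, 0) = u_2^{(k)}(\cdot, 0),\qquad k\ge 1.
\end{equation*}
Since $F_j\in\mathcal{A}$ expands as $F_j(x,z)=\sum_{k\ge 1} F_j^{(k)}(x) z^k/k!$, the inverse problem reduces to recovering the coefficient $\kappa_j$ and each $F_j^{(k)}$.

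The first task is to recover $\kappa$ by the \emph{high-order variation}: take $f_1\equiv 0$ (still admissible as long as $f_2\ge 0$ is nontrivial) and $\psi_1$ arbitrary. Because $\tfrac12\kappa|\nabla u|^2$ and $\mathrm{div}(m\kappa\nabla u)$ are quadratic in $(m,\nabla u)$, the first-order system gives $m^{(1)}\equiv 0$ and $u^{(1)}:=u_1^{(1)}=u_2^{(1)}$ is an arbitrary backward Neumann heat solution with terminal $\psi_1$. At second order, $M:=m_1^{(2)}-m_2^{(2)}$ solves $\partial_t M-\beta\Delta M=0$ with $M(\cdot,0)=0$, hence $M\equiv 0$; and $U:=u_1^{(2)}-u_2^{(2)}$ satisfies
\begin{equation*}
-\partial_t U-\beta\Delta U = -\tfrac12(\kappa_1-\kappa_2)(x)\,|\nabla u^{(1)}(x,t)|^2,
\end{equation*}
with $U(\cdot,T)=0$, $\partial_\nu U|_\Sigma=0$, and the overdetermined measurement data $U|_\Sigma=0$ and $U(\cdot,0)=0$. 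Pairing against a forward Neumann heat solution $\phi$ and integrating by parts (all boundary and time-slice terms vanish because of the overdetermined data) yields the Alessandrini identity
\begin{equation*}
\int_0^T\!\int_\Omega (\kappa_1-\kappa_2)(x)\,|\nabla u^{(1)}(x,t)|^2\,\phi(x,t)\,dx\,dt=0;
\end{equation*}
varying $\psi_1$ so that $|\nabla u^{(1)}|^2$ ranges over a sufficiently rich family via Runge-type approximation for the heat equation, and varying $\phi$, we conclude $\kappa_1\equiv\kappa_2$ on $\Omega$.

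With $\kappa$ matched, I would recover $F^{(1)}$ from the first-order step using $f_1>0$. In this regime $m_j^{(1)}$ is the common Neumann heat solution $m^{(1)}>0$ on $\overline{\Omega}\times(0,T]$ (by the strong maximum principle), and $u_j^{(1)}$ solves the backward Neumann heat equation with terminal $\psi_1$ and source $F_j^{(1)}(x)m^{(1)}(x,t)$. The difference $w:=u_1^{(1)}-u_2^{(1)}$ satisfies a backward heat equation with source $(F_1^{(1)}-F_2^{(1)})m^{(1)}$ and both Dirichlet and Neumann zero on $\Sigma$ plus $w(\cdot,T)=w(\cdot,0)=0$. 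The same adjoint pairing yields $\int_Q (F_1^{(1)}-F_2^{(1)})m^{(1)}\phi\,dx\,dt=0$, and completeness of products of Neumann heat solutions in $L^2(Q)$ gives $F_1^{(1)}\equiv F_2^{(1)}$. Iterating at the $k$-th linearisation, with $\kappa$ and all lower $F^{(j)}$ already matched, the leading unknown source of the $k$-th-order difference system is $(F_1^{(k)}-F_2^{(k)})(m^{(1)})^k/k!$, and the analogous identity forces $F_1^{(k)}=F_2^{(k)}$ for every $k\ge 1$. Holomorphicity of $F_j(x,\cdot)$ then upgrades matching of all Taylor coefficients to $F_1\equiv F_2$ on $\Omega\times\R$.

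The main obstacle is the simultaneous presence of $\delta\kappa$ and $\delta F^{(2)}$ in the second-order source, since naively the constraint $m\ge 0$ would force $f_2\ge 0$ and cripple the set of test configurations available to separate the two unknowns. The high-order variation resolves this by allowing us to degenerate $f_1$ to $0$: this kills the $(m^{(1)})^2$-term carrying $\delta F^{(2)}$ as well as the $F^{(1)} M$ coupling, isolating the Hamiltonian contribution cleanly. A secondary technical ingredient, used throughout the Alessandrini identities, is the Runge-type density of products of Neumann heat solutions in $L^2(Q)$, which converts the weak identities into pointwise equalities of the unknown coefficients; this should follow from standard parabolic unique continuation together with classical heat-kernel approximation arguments.
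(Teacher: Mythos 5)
Your overall architecture (a first-order input forced to be non-negative, a second-order input left free, adjoint pairing against caloric test functions, then induction on the Taylor coefficients of $F$) is essentially the paper's, but there is a concrete error in your $\kappa$-recovery step. Having set $f_1\equiv 0$ while insisting that $f_2\ge 0$ be nontrivial (precisely so that $m_0=\epsilon^2 f_2$ has positive mass), you correctly get $m_j^{(1)}\equiv 0$, but then $m_1^{(2)}=m_2^{(2)}=:m^{(2)}$ is the Neumann heat flow of $f_2$, which is \emph{not} zero. The $\epsilon^2$-coefficient of $F_j(x,m_j)$ is therefore $F_j^{(1)}(x)\,m^{(2)}(x,t)$ (the $F_j^{(2)}$-term dies with $(m^{(1)})^2$, but the $F_j^{(1)}$-term survives), so the correct difference equation is
\begin{equation*}
-\partial_t U-\beta\Delta U+\tfrac12(\kappa_1-\kappa_2)(x)\,|\nabla u^{(1)}|^2=(F_1^{(1)}-F_2^{(1)})(x)\,m^{(2)}(x,t),
\end{equation*}
not the one you wrote. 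Your Alessandrini identity is then contaminated by the unknown difference $F_1^{(1)}-F_2^{(1)}$, which in your ordering ($\kappa$ first, $F^{(1)}$ afterwards) you are not entitled to discard; as written, the step fails.

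The gap is repairable within your own framework: (i) note that the contaminating term does not depend on $\psi_1$, so subtracting the identity obtained with $\psi_1=0$ eliminates it; (ii) recover $F^{(1)}$ first — your first-order step with $f_1\ge 0$ never involves $\kappa$, so it can be run before the $\kappa$ step, after which the extra source cancels; or (iii) take $m_0\equiv 0$ in the $\kappa$ step, which is exactly what the paper does (it uses two terminal parameters $\psi=\varepsilon_1 f_1+\varepsilon_2 f_2$ with $m_0=0$, so all $m$-variations vanish identically and only the term $\kappa_j\nabla u^{(1)}\cdot\nabla u^{(2)}$ survives at second order). A secondary point: your appeal to ``density of products of Neumann heat solutions in $L^2(Q)$'' is heavier than needed and is not substantiated in the proposal; the paper avoids it by explicit choices — $f_1\equiv 1$ so that $m^{(1)}\equiv 1$, $u^{(1)}=e^{\lambda t}g$ with $g$ a Neumann eigenfunction whose gradient vanishes on no open subset, and $\omega=e^{-\beta|\xi|^2 t-\mathrm{i}x\cdot\xi}$ — which reduce each orthogonality identity to the vanishing of an ordinary Fourier transform. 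You should make your ``sufficiently rich family'' equally explicit for the argument to be complete.
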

\begin{thm}\label{der F2}
	Assume that  
	$$F_j(x,m)=\int_{\Omega}K_j(x,y)m(y,t)dy\in\mathcal{B},\ j=1,2. $$ 
	Let $\mathcal{N}_{F_j,H_j}$, $j=1,2$, be the measurement map associated to
	the following system:
	\begin{equation}\label{eq:mfg2}
		\begin{cases}
			-\p_tu(x,t)-\beta\Delta u(x,t)+\frac 1 2 \kappa_j{|\nabla u(x,t)|^2}= \int_{\Omega}K_j(x,y)m(y,t)dy,& \text{ in }  Q,\medskip\\
			\p_t m(x,t)-\beta\Delta m(x,t)-{\rm div} (m(x,t) \kappa_j\nabla u(x,t))=0,&\text{ in } Q,\medskip\\
			\p_{\nu} u(x,t)=\p_{\nu}m(x,t)=0      &\text{ on } \Sigma,\medskip\\
			u(x,T)=\psi(x), & \text{ in } \Omega,\medskip\\
			m(x,0)=m_0(x), & \text{ in } \Omega.\\
		\end{cases}  		
	\end{equation}	
	If for any $(m_0,\psi)\in [ C^{2+\alpha}(\Omega) \cap \mathcal{O}]\times C^{2+\alpha}(\Omega) $, where $\mathcal{O}$ is defined in \eqref{eq:distr1}, one has 
	$$\mathcal{N}_{F_1,H_1}(m_0,\psi)=\mathcal{N}_{F_2,H_2}(m_0,\psi),$$    then it holds that 
	$$\kappa_1=\kappa_2\ \text{  in  }\ \Omega,$$ and 
	$$K_1(x,y)=K_2(x,y)\ \text{  in  }\ \Omega\times \Omega.$$ 
\end{thm}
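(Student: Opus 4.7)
The plan is to invoke the high-order variation and successive linearisation machinery of Section~4 around the trivial solution $(u,m)\equiv(0,0)$, which is admissible because the relaxation $a\in[0,1]$ in \eqref{eq:distr1} allows the input $m_0\equiv 0$. Two features of \eqref{eq:mfg2} guide the argument. First, both the kinetic Hamiltonian $\tfrac12\kappa_j|\nabla u|^2$ and the drift $\nabla\cdot(m\kappa_j\nabla u)$ are quadratic in $(u,m)$, so $\kappa_j$ contributes only at second linearised order; second, the non-local running cost $F_j(x,m)=\int_\Omega K_j(x,y)m(y,t)\,dy$ is linear in $m$, so $K_j$ already appears at first order. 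I would therefore recover $K_j$ from a first-order analysis, and $\kappa_j$ from an independent second-order analysis carried out with the coupling switched off via $m_0\equiv 0$.

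For the recovery of $K_j$, take $m_0^\epsilon=\epsilon g$ with $g\in C^{2+\alpha}(\overline\Omega)$ strictly positive and Neumann-compatible, and $\psi^\epsilon=0$, writing $(u^\epsilon,m^\epsilon)=\epsilon(u^{(1)},m^{(1)})+O(\epsilon^2)$. The first-order system decouples: $m^{(1)}$ solves the homogeneous Neumann heat equation with datum $g$ (hence is $j$-independent), while $u^{(1)}$ solves the backward Neumann heat equation with source $\int_\Omega K_j(x,y)m^{(1)}(y,t)\,dy$ and terminal datum $0$. The equality $\mathcal{N}_{F_1,H_1}=\mathcal{N}_{F_2,H_2}$ forces $w=u^{(1)}_1-u^{(1)}_2$ to satisfy the overdetermined data $w|_\Sigma=\partial_\nu w|_\Sigma=0$ and $w(\cdot,0)=w(\cdot,T)=0$. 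Pairing the $w$-equation against an arbitrary solution $\phi$ of the forward Neumann heat equation and integrating by parts in $Q$ kills every boundary contribution, leaving
\begin{equation*}
\int_0^T\!\int_\Omega\!\int_\Omega (K_1-K_2)(x,y)\,m^{(1)}(y,t)\,\phi(x,t)\,dy\,dx\,dt=0.
\end{equation*}
The positivity constraint on $g$ is neutralised by writing $g=c+\varphi_j$ with $c$ a large constant and $\varphi_j$ a Neumann eigenfunction of $-\Delta$: the constant part contributes nothing by Definition~\ref{admi 2}(ii), so $m^{(1)}$ effectively ranges over all eigenmodes. Choosing $\phi(\cdot,0)=\varphi_k$ reduces the identity to a nonzero time-factor times $\int\!\int(K_1-K_2)(x,y)\varphi_j(y)\varphi_k(x)\,dy\,dx=0$, and completeness of $\{\varphi_j\otimes\varphi_k\}_{j,k\geq 0}$ in $L^2(\Omega\times\Omega)$ (together with the automatic vanishing of the $j=0$ row via condition (ii)) yields $K_1\equiv K_2$.

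For the recovery of $\kappa_j$, switch to the input $m_0\equiv 0$, so the forward well-posedness of Section~\ref{section wp} gives as unique solution $(u^\epsilon,0)$ with $u^\epsilon$ satisfying the viscous Hamilton--Jacobi equation $-\partial_t u^\epsilon-\beta\Delta u^\epsilon+\tfrac12\kappa_j|\nabla u^\epsilon|^2=0$, $u^\epsilon(\cdot,T)=\psi^\epsilon=\epsilon\tilde\psi$, $\partial_\nu u^\epsilon=0$, where I have used that $F_j(x,0)=0$. Expanding $u^\epsilon=\epsilon u^{(1)}+\epsilon^2 u^{(2)}+O(\epsilon^3)$ gives the $j$-independent backward Neumann heat equation for $u^{(1)}$ with terminal datum $\tilde\psi$ and
\begin{equation*}
-\partial_t u^{(2)}-\beta\Delta u^{(2)}+\tfrac12\kappa_j|\nabla u^{(1)}|^2=0,\qquad u^{(2)}(\cdot,T)=0,\qquad \partial_\nu u^{(2)}=0.
\end{equation*}
The same duality scheme applied to $W=u^{(2)}_1-u^{(2)}_2$ produces $\int_0^T\!\int_\Omega(\kappa_1-\kappa_2)|\nabla u^{(1)}|^2\phi\,dx\,dt=0$ for every forward-heat test function $\phi$. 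Taking $\tilde\psi=a\varphi_i+b\varphi_j$ and varying $a,b$ isolates the cross product $\nabla\varphi_i\cdot\nabla\varphi_j$ in the quadratic term; the distinct exponential time factors $e^{-\beta(\lambda_i+\lambda_j)(T-t)}$ render every resulting time integral nonzero, so with $\phi(\cdot,0)=\varphi_k$ one extracts $\int_\Omega(\kappa_1-\kappa_2)\nabla\varphi_i\cdot\nabla\varphi_j\,\varphi_k\,dx=0$ for all $i,j,k$. Completeness then gives $(\kappa_1-\kappa_2)(x)\nabla\varphi_i(x)\cdot\nabla\varphi_j(x)\equiv 0$ on $\Omega$ for every $i,j$. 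Finally, expanding a bump $\chi\in C_c^\infty(\Omega)$ with $\nabla\chi(x_0)\neq 0$ in the Neumann basis (the series converging in $C^1$ by smoothness) excludes $\kappa_1(x_0)\neq\kappa_2(x_0)$, and hence $\kappa_1\equiv\kappa_2$.

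The principal obstacle is not the identification algebra above but the rigorous justification of the expansions $u^\epsilon=\sum_n\epsilon^n u^{(n)}$ and $m^\epsilon=\sum_n\epsilon^n m^{(n)}$ with remainders controlled uniformly in $C^{2+\alpha,1+\alpha/2}(Q)$, so that differentiating $\mathcal{N}_{F_1,H_1}=\mathcal{N}_{F_2,H_2}$ in $\epsilon$ is legitimate at every order and the linearised systems faithfully record the linearisation of the measurement map; this is precisely what the forward well-posedness of Section~\ref{section wp} and the high-order variation framework of Section~4 are designed to supply. A secondary subtlety is the passage from the integral identity $(\kappa_1-\kappa_2)\nabla\varphi_i\cdot\nabla\varphi_j\equiv 0$ to pointwise equality of $\kappa_1$ and $\kappa_2$, which must be carried out with some care regarding the convergence of Neumann-eigenfunction expansions of smooth compactly supported test functions at interior points.
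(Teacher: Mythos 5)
Your proposal is correct, and for the recovery of $\kappa$ it is essentially the paper's argument: take $m_0\equiv 0$ (so $m\equiv 0$ and only the Hamilton--Jacobi part survives), pass to the second-order linearisation in $\psi$, and pair the difference of the second-order terms against forward Neumann heat solutions; the paper polarises with two parameters $\varepsilon_1 f_1+\varepsilon_2 f_2$ and tests against $e^{-\beta|\xi|^2t-\mathrm{i}x\cdot\xi}$ before invoking Lemma~\ref{E-F is complete}, while you polarise inside one parameter via $\tilde\psi=a\varphi_i+b\varphi_j$ and test with eigenfunction solutions. (For your last pointwise step you do not need $C^1$-convergence of the eigenexpansion of a bump function: the diagonal choice $i=j$ gives $(\kappa_1-\kappa_2)\,|\nabla\varphi_i|^2=0$ for every $i$, and if $\kappa_1\neq\kappa_2$ on an open set then every nonconstant Neumann eigenfunction would have vanishing gradient there, which is exactly what Lemma~\ref{E-F is complete} excludes.) The genuine difference is in recovering $K$: the paper uses the high-order variation of Section~\ref{sect:hvm}, $m_0=\varepsilon g_1+\varepsilon^2 g_2$ with $g_1=1$, so positivity is carried by the first-order input while the sign-free $g_2$ drives a second-order system from which $K$ is read off; you stay at first order with $m_0=\epsilon(c+\varphi_j)$, $c$ large, and use the normalisation $\int_\Omega K(x,y)\,dy=0$ of Definition~\ref{admi 2}(ii) together with the linearity of $F\in\mathcal{B}$ in $m$ to annihilate the constant's contribution, so that non-negative inputs already probe all eigenmodes and completeness of $\{\varphi_j\otimes\varphi_k\}$ finishes the proof. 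Your route is more economical for this theorem and shows that, for the class $\mathcal{B}$, condition (ii) lets one sidestep the positivity obstruction already at first order (a point that refines the discussion in Section 6); the paper's high-order variation buys an entirely unconstrained second-order input without exploiting linearity of $F$ in $m$, which is what makes it the more flexible device beyond this specific non-local setting. Both arguments justify the $\epsilon$-differentiations by the well-posedness and holomorphy results of Section~\ref{section wp}, and both use eigenfunction data at the same level of rigor regarding $C^{2+\alpha}$ admissibility.
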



\section{Local well-posedness of the forward problems }\label{section wp}

We prove several auxiliary results on the forward problem of the MFG system $\eqref{main}$ in this section. One of the most significant insights is the infinite differentiability of the system with respect to small variations around given input $m_0(x)$ or $\psi(x)$. This forms the basis to develop the linearization method in what follows for the associated inverse problems.


In the next two theorems, we show the (infinite) differentiability of the MFG system \eqref{main} with respect to small variations of $\psi(x)$ and $m_0(x)$, respectively.  

\begin{thm}\label{local_wellpose1}
	Given $\psi(x)=0$. Suppose that $F\in\mathcal{A}$ . The following result holds:
	\begin{enumerate}		
		\item[(a)]
		There exist constants $\delta>0$ and $C>0$ such that for any 
		\[
		m_0\in B_{\delta}(C^{2+\alpha}(\Omega)) :=\{m_0\in C^{2+\alpha}(\Omega): \|m_0\|_{C^{2+\alpha}(\Omega)}\leq\delta \},
		\]
		the MFG system $\eqref{main}$ has a solution $(u,m)\in
		[C^{2+\alpha,1+\frac{\alpha}{2}}(Q)]^2$ which satisfies
		\begin{equation}
			\|(u,m)\|_{ C^{2+\alpha,1+\frac{\alpha}{2}}(Q)}:= \|u\|_{C^{2+\alpha,1+\frac{\alpha}{2}}(Q)}+ \|m\|_{C^{2+\alpha,1+\frac{\alpha}{2}}(Q)}\leq C\|m_0\|_{ C^{2+\alpha}(\Omega)}.
		\end{equation}
		Furthermore, the solution $(u,m)$ is unique within the class
		\begin{equation}
			\{ (u,m)\in  C^{2+\alpha,1+\frac{\alpha}{2}}(Q)\times C^{2+\alpha,1+\frac{\alpha}{2}}(Q): \|(u,m)\|_{ C^{2+\alpha,1+\frac{\alpha}{2}}(Q)}\leq C\delta \}.
		\end{equation}		
		\item[(b)] Define a function 
		\[
		S: B_{\delta}(C^{2+\alpha}(\Omega)\to C^{2+\alpha,1+\frac{\alpha}{2}}(Q)\times C^{2+\alpha,1+\frac{\alpha}{2}}(Q)\ \mbox{by $S(m_0):=(u,m)$}. 
		\] 
		where $(u,m)$ is the unique solution to the MFG system \eqref{main}.
		Then for any $m_0\in B_{\delta}(C^{2+\alpha}(\Omega))$, $S$ is holomorphic at $m_0$.
	\end{enumerate}
\end{thm}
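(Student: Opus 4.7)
My plan is to recast the MFG system as a nonlinear operator equation $\mathcal{K}(m_0,u,m)=0$ between suitable Hölder spaces, observe that $(m_0,u,m)=(0,0,0)$ is a solution (using $F(x,0)=0$ from the definition of $\mathcal{A}$), and then apply a holomorphic implicit function theorem in Banach spaces. The main analytic input is that the linearisation at the trivial solution decouples into two standard linear parabolic Neumann problems, whose solvability and Schauder estimates are classical. Holomorphic dependence on $m_0$ of the solution map $S$ then comes for free from the holomorphic version of the IFT, since the nonlinearity $F$ is assumed holomorphic in $m$ and every other term in the system depends polynomially on $(u,m)$.

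\textbf{Setting up the operator.} Let
\[
X:=C^{2+\alpha}(\overline{\Omega}),\qquad Y:=C^{2+\alpha,1+\alpha/2}(Q),\qquad Z:=C^{\alpha,\alpha/2}(Q),
\]
and define
\[
\mathcal{K}:X\times Y\times Y\longrightarrow Z\times Z\times C^{1+\alpha,(1+\alpha)/2}(\Sigma)\times C^{1+\alpha,(1+\alpha)/2}(\Sigma)\times X\times X
\]
by
\begin{align*}
\mathcal{K}(m_0,u,m):=\bigl(&-\p_tu-\beta\Delta u+\tfrac12\kappa|\nabla u|^2-F(x,m),\\
& \p_tm-\beta\Delta m-\div(m\kappa\nabla u),\\
& \p_\nu u\big|_\Sigma,\ \p_\nu m\big|_\Sigma,\ u(\cdot,T),\ m(\cdot,0)-m_0\bigr).
\end{align*}
Because $F\in\mathcal{A}$ is holomorphic in $m$ with $F(x,0)=0$ and all other terms are polynomial in $(u,m,\nabla u,\nabla m)$, the map $\mathcal{K}$ is holomorphic between the indicated Banach spaces, and $\mathcal{K}(0,0,0)=0$.

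\textbf{Invertibility of the linearisation.} The partial Fréchet derivative $D_{(u,m)}\mathcal{K}(0,0,0)$ acts on $(v,\mu)\in Y\times Y$ by
\begin{align*}
D_{(u,m)}\mathcal{K}(0,0,0)[v,\mu]=\bigl(&-\p_tv-\beta\Delta v-F^{(1)}(x)\mu,\ \p_t\mu-\beta\Delta\mu,\\
& \p_\nu v|_\Sigma,\ \p_\nu\mu|_\Sigma,\ v(\cdot,T),\ \mu(\cdot,0)\bigr),
\end{align*}
since every term quadratic in $u$ or containing the product $m\nabla u$ drops out at $(u,m)=(0,0)$. Given data $(f_1,f_2,g_1,g_2,\psi_T,\mu_0)$, this triangular system is solved by first running the forward Neumann heat equation for $\mu$ with source $f_2$, initial datum $\mu_0$ and boundary datum $g_2$, and then running the backward Neumann heat equation for $v$ with source $f_1+F^{(1)}\mu$, terminal datum $\psi_T$, and boundary datum $g_1$. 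By the classical parabolic Schauder theory for Neumann problems (Ladyzhenskaya--Solonnikov--Ural'tseva), each problem is uniquely solvable in $Y$ with the a priori estimate controlled by the full data in the target space, so $D_{(u,m)}\mathcal{K}(0,0,0)$ is a topological isomorphism.

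\textbf{Conclusion and the main obstacle.} With the isomorphism in place, the holomorphic implicit function theorem (see e.g.\ the functional-analytic formulations used in the MFG inverse problem literature cited in the paper) furnishes $\delta,C>0$ and a unique holomorphic branch $S:B_\delta(X)\to Y\times Y$ with $S(0)=(0,0)$ and $\mathcal{K}(m_0,S(m_0))=0$; continuity of $DS$ at $0$ yields the quantitative estimate $\|S(m_0)\|_{Y\times Y}\le C\|m_0\|_X$ and the local uniqueness statement in the small-norm ball. The main technical point I expect to have to handle carefully is matching the boundary/corner compatibility required by parabolic Schauder regularity: the $C^{2+\alpha,1+\alpha/2}$-category demands the data to be compatible with the Neumann condition at the initial (resp.\ terminal) time, and this has to be respected both by the target space in the definition of $\mathcal{K}$ and by the admissible set of $m_0$ in $B_\delta(X)$. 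Once the function spaces are aligned with these compatibility conditions, the rest is a clean application of the holomorphic IFT.
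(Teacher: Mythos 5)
Your proposal follows essentially the same route as the paper: recast the system as a holomorphic operator equation vanishing at the trivial solution, note that the linearisation at $(0,0,0)$ is a triangular pair of Neumann heat problems solvable by parabolic Schauder theory (forward for $m$, then backward for $u$ with source $F^{(1)}m$), and invoke the holomorphic implicit function theorem to obtain the solution map $S$, its Lipschitz estimate, and local uniqueness. The only differences are bookkeeping ones — the paper builds the Neumann conditions into the domain spaces rather than the target, and verifies explicitly via the Cauchy integral formula that $m\mapsto F(\cdot,m)$ maps into $C^{\alpha,\alpha/2}(Q)$ — so your argument is correct and matches the paper's proof in substance.
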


\begin{thm}\label{local_wellpose1'}
	Given $m_0(x)=0$. Suppose that $F\in\mathcal{A}$ . The following result holds:
	\begin{enumerate}		
		\item[(a)]
		There exist constants $\delta>0$ and $C>0$ such that for any 
		\[
		\psi(x)\in B_{\delta}(C^{2+\alpha}(\Omega)) :=\{\psi\in C^{2+\alpha}(\Omega): \|\psi(x)\|_{C^{2+\alpha}(\Omega)}\leq\delta \},
		\]
		the MFG system $\eqref{main}$ has a solution $(u,m)\in
		[C^{2+\alpha,1+\frac{\alpha}{2}}(Q)]^2$ which satisfies
		\begin{equation}
			\|(u,m)\|_{ C^{2+\alpha,1+\frac{\alpha}{2}}(Q)}:= \|u\|_{C^{2+\alpha,1+\frac{\alpha}{2}}(Q)}+ \|m\|_{C^{2+\alpha,1+\frac{\alpha}{2}}(Q)}\leq C\|\psi(x)\|_{ C^{2+\alpha}(\Omega)}.
		\end{equation}
		Furthermore, the solution $(u,m)$ is unique within the class
		\begin{equation}
			\{ (u,m)\in  C^{2+\alpha,1+\frac{\alpha}{2}}(Q)\times C^{2+\alpha,1+\frac{\alpha}{2}}(Q): \|(u,m)\|_{ C^{2+\alpha,1+\frac{\alpha}{2}}(Q)}\leq C\delta \}.
		\end{equation}		
		\item[(b)] Define a function 
		\[
		S: B_{\delta}(C^{2+\alpha}(\Omega)\to C^{2+\alpha,1+\frac{\alpha}{2}}(Q)\times C^{2+\alpha,1+\frac{\alpha}{2}}(Q)\ \mbox{by $S(\psi(x)):=(u,m)$}. 
		\] 
		where $(u,m)$ is the unique solution to the MFG system \eqref{main}.
		Then for any $\psi(x)\in B_{\delta}(C^{2+\alpha}(\Omega))$, $S$ is holomorphic at $\psi$.
	\end{enumerate}
\end{thm}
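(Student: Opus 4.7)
The plan is to reduce the theorem to an application of the holomorphic implicit function theorem in parabolic Hölder spaces, mirroring the strategy of Theorem \ref{local_wellpose1} but with the roles of $m_0$ and $\psi$ swapped. First I would fix $m_0(x)\equiv 0$ and observe that $(u,m)=(0,0)$ is a trivial solution when $\psi\equiv 0$: this uses the admissibility condition $F(x,0)=0$ from Definition \ref{Admissible class2}(ii), together with the fact that the Hamiltonian $H(x,p)=\tfrac12\kappa(x)|p|^2$ and the drift $m\nabla_p H=m\kappa\nabla u$ both vanish identically at $(u,m)=(0,0)$. Next, I would define the Banach spaces
\[
X := \bigl\{(u,m)\in [C^{2+\alpha,1+\alpha/2}(Q)]^2 : \p_\nu u = \p_\nu m = 0 \text{ on } \Sigma\bigr\},\quad Y := [C^{\alpha,\alpha/2}(Q)]^2\times [C^{2+\alpha}(\Omega)]^2,
\]
and the map $\mathcal{K}:C^{2+\alpha}(\Omega)\times X\to Y$ by
\[
\mathcal{K}(\psi,u,m) := \bigl(-\dt u-\beta\Delta u+\tfrac12\kappa|\nabla u|^2 - F(x,m),\ \dt m-\beta\Delta m-\di(m\kappa\nabla u),\ u(\cdot,T)-\psi,\ m(\cdot,0)\bigr).
\]
Since $F\in\mathcal{A}$ is holomorphic in its second slot and the Hamiltonian is polynomial in $\nabla u$, the map $\mathcal{K}$ is holomorphic jointly in its arguments by standard composition arguments for power series in Banach spaces.

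The next step is to verify that the partial Fréchet derivative $D_{(u,m)}\mathcal{K}(0,0,0)$ is a Banach space isomorphism from $X$ to $Y$. A direct computation at the trivial base point, using $\nabla u\equiv 0$ and $m\equiv 0$, shows that the linearisation decouples into
\[
-\dt \tilde u - \beta\Delta\tilde u = F^{(1)}(x)\tilde m + f_1,\qquad \dt\tilde m-\beta\Delta\tilde m = f_2,
\]
with homogeneous Neumann conditions and prescribed terminal/initial data. The forward equation for $\tilde m$ is a standard linear Neumann heat problem, uniquely solvable in $C^{2+\alpha,1+\alpha/2}(Q)$ by the classical Ladyzhenskaya-Solonnikov-Uraltseva theory (modulo a compatibility check on the Neumann data at $t=0$, which is trivially met for zero boundary data). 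Once $\tilde m$ is known, the backward equation for $\tilde u$ is another Neumann heat problem with a Hölder source term and is solved the same way. This gives the required invertibility.

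Applying the holomorphic implicit function theorem then yields $\delta>0$ and a unique holomorphic map $S:B_\delta(C^{2+\alpha}(\Omega))\to X$ with $\mathcal{K}(\psi,S(\psi))=0$ and $S(0)=(0,0)$. Local Lipschitz continuity of $S$ together with $S(0)=0$ delivers the estimate $\|(u,m)\|_{C^{2+\alpha,1+\alpha/2}(Q)}\leq C\|\psi\|_{C^{2+\alpha}(\Omega)}$, while the local uniqueness clause follows from the uniqueness statement in the implicit function theorem after possibly shrinking $\delta$. I would then read off parts (a) and (b) simultaneously.

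The main obstacle I anticipate is not the abstract scheme, which is entirely parallel to Theorem \ref{local_wellpose1}, but the bookkeeping at two technical junctures: (i) checking that the domain $X$ correctly incorporates the Neumann boundary conditions so that the linearised Neumann heat problems satisfy the compatibility conditions needed for $C^{2+\alpha,1+\alpha/2}$ regularity up to the parabolic boundary, and (ii) justifying joint holomorphy of the nonlinear term $F(x,m)$ as a map between the specified Hölder spaces, which requires a careful power-series estimate exploiting the Banach algebra property of $C^{2+\alpha,1+\alpha/2}(Q)$ and the analyticity assumed in Definition \ref{Admissible class2}. Neither step is novel — both mirror the treatment already given for Theorem \ref{local_wellpose1} — but they must be invoked with $\psi$ (rather than $m_0$) playing the role of the perturbation parameter.
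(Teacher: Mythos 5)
Your proposal follows essentially the same route as the paper's proof: both set up the map $\mathscr{K}(\psi,\tilde u,\tilde m)$ on parabolic H\"older spaces with Neumann conditions built into the domain, verify well-definedness and holomorphy via the power-series/Banach-algebra estimates for $F\in\mathcal{A}$, show that the linearisation at $(0,0,0)$ is an isomorphism by solving the decoupled forward Neumann heat problem for $\tilde m$ and then the backward problem for $\tilde u$ with source $F^{(1)}\tilde m$, and conclude with the holomorphic implicit function theorem plus the Lipschitz bound $S(0)=(0,0)$. The technical points you flag (compatibility for Schauder regularity up to the parabolic boundary, joint holomorphy of $F$) are treated no more explicitly in the paper, so your plan is a faithful match.
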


The proofs of Theorems~\ref{local_wellpose1} and \ref{local_wellpose1'} are similar and we only present the one for Theorem~\ref{local_wellpose1'}. 

\begin{proof}[Proof of Theorem $\ref{local_wellpose1'}$]
		Let 
	\begin{align*}
		&Y_1:= \{\psi\in C^{2+\alpha}(\Omega ) :\p_{\nu}\psi=0      \}, \\
		&Y_2:\{(u,m)\in C^{2+\alpha,1+\frac{\alpha}{2}}(Q)\times C^{2+\alpha,1+\frac{\alpha}{2}}(Q):\p_{\nu}m=\p_{\nu}u=0  \text{ in }\Sigma\},\\
		&Y_3:=Y_1\times Y_1\times C^{\alpha,\frac{\alpha}{2}}(Q )\times C^{\alpha,\frac{\alpha}{2}}(Q ),
	\end{align*} 
     and define a map $\mathscr{K}:Y_1\times Y_2 \to Y_3$ by that for any $(\psi,\tilde u,\tilde m)\in Y_1\times Y_2$,
	\begin{align*}
		&
		\mathscr{K}( \psi,\tilde u,\tilde m)(x,t)\\
		:=&\big( \tilde u(x,T)-\psi(x), \tilde m(x,0) , 
		-\p_t\tilde u(x,t)-\beta\Delta \tilde u(x,t)\\ &+\frac{\kappa(x)|\nabla \tilde u(x,t)|^2}{2}- F(x,\tilde m(x,t)), 
		\p_t \tilde m(x,t)-\beta\Delta \tilde m(x,t)-{\rm div}(\tilde m(x,t)\kappa(x)\nabla \tilde u(x,t))  \big) .
	\end{align*}

   We begin by demonstrating that $\mathscr{K} $ is well-defined. As a result of the fact that the H\"older space is an algebra under point-wise multiplication, and hence we have 
   $$\kappa(x)|\nabla u|^2, {\rm div}(m(x,t)\kappa(x)\nabla u(x,t))  \in C^{\alpha,\frac{\alpha}{2}}(Q ).$$
	By the Cauchy integral formula,
	\begin{equation}\label{eq:F1}
		F^{(k)}\leq \frac{k!}{R^k}\sup_{|z|=R}\|F(\cdot,z)\|_{C^{\alpha,\frac{\alpha}{2}}(Q ) },\ \ R>0.
	\end{equation}
	Then there is $L>0$ such that for all $k\in\mathbb{N}$,
	\begin{equation}\label{eq:F2}
		\left\|\frac{F^{(k)}}{k!}m^k\right\|_{C^{\alpha,\frac{\alpha}{2}}(Q )}\leq \frac{L^k}{R^k}\|m\|^k_{C^{\alpha,\frac{\alpha}{2}}(Q )}\sup_{|z|=R}\|F(\cdot,z)\|_{C^{\alpha,\frac{\alpha}{2}}(Q ) }.
	\end{equation}
	By choosing $R\in\mathbb{R}_+$ large enough and by virtue of \eqref{eq:F1} and \eqref{eq:F2}, it can be seen that the series 
	$\sum_{k=1}^{\infty}F^{(k)}(x)\frac{z^k}{k!}$
	converges in $C^{\alpha,\frac{\alpha}{2}}(Q )$ and therefore $F(x,m(x,t))\in  C^{\alpha,\frac{\alpha}{2}}(Q ).$ 
	This implies that $\mathscr{K} $ is well-defined.

	Let us show that $\mathscr{K}$ is holomorphic. Verifying $\mathscr{K}$ being weakly holomorphic is sufficient because it is clearly locally confined. That is we aim to show the map
	$$\lambda\in\mathbb C \mapsto \mathscr{K}((m_0,\tilde u,\tilde m)+\lambda (\bar m_0,\bar u,\bar m))\in Y_3,\quad\text{for any $(\bar m_0,\bar u,\bar m)\in Y_1\times Y_2$}$$
	is holomorphic. In fact, this follows from the fact that the series 
	$\sum_{k=1}^{\infty}F^{(k)}(x)\frac{z^k}{k!}$
	converges in $C^{\alpha,\frac{\alpha}{2}}(Q )$ .

	Note that $  \mathscr{K}(0,0,0)=0$. Let us compute $\nabla_{(\tilde u,\tilde m)} \mathscr{K} (0,0,0)$:
	\begin{equation}\label{Fer diff}
		\begin{aligned}
			\nabla_{(\tilde u,\tilde m)} \mathscr{K}(0,0,0) (u,m) =( u|_{t=T}, m|_{t=0}, 
			-\p_tu(x,t)-\beta\Delta u(x,t)-F^{(1)}m, \p_t m(x,t)-\beta\Delta m(x,t)).
		\end{aligned}			
	\end{equation}
	
	Then
	If $\nabla_{(\tilde u,\tilde m)} \mathscr{K} (0,0,0)=0$, we have 
	$ \tilde m=0$ and then $\tilde u=0$. Therefore, the map is an injection. 
	
	On the other hand,  letting $ (r(x),s(x,t))\in C^{2+\alpha}(\Omega)\times C^{\alpha,\frac{\alpha}{2}}(Q ) $, there exists $a(x,t)\in C^{2+\alpha,1+\frac{\alpha}{2}}(Q)$ such that
	\begin{equation*}
		\begin{cases}
			\p_t a(x,t)-\beta\Delta a(x,t)=s(x,t)  &\text{ in } Q,\medskip\\
			\p_{\nu} a=0 &\text{ in } \Sigma,\medskip\\ 
			a(x,0)=r(x)                       &  \text{ in } \Omega .
		\end{cases}
	\end{equation*}
	Then letting $ (r'(x),s'(x,t))\in C^{2+\alpha}(\Omega)\times C^{\alpha,\frac{\alpha}{2}}(Q ) $, one can show that there exists $ b(x,t)\in C^{2+\alpha,1+\frac{\alpha}{2}}(Q)$ such that
	\begin{equation*}
		\begin{cases}
			-\p_t b(x,t)-\beta\Delta b(x,t)-F^{(1)}(x)a=s'(x,t)  &\text{ in } Q,\medskip\\
			\p_{\nu} b=0 &\text{ in } \Sigma,\medskip\\ 
			b(x,T)=r'(x)                  &  \text{ in } \Omega.
		\end{cases}
	\end{equation*}
	This shows that $	\nabla_{(\tilde u,\tilde m)} \mathscr{K}(0,0,0)$ is also surjective.
	Therefore, $\nabla_{(\tilde u,\tilde m)} \mathscr{K} (0,0,0)$ is a linear isomorphism between $Y_2$ and $Y_3$. Hence, by the implicit function theorem, there exist $\delta>0$ and a unique holomorphic function $S: B_{\delta}(\Omega)\to Y_2$ such that $\mathscr{K}(\psi,S(\psi))=0$ for all $m_0\in B_{\delta}(\Omega) $.
	
	By letting $(u,m)=S(\psi)$, we obtain the unique solution of the MFG system \eqref{main}. Let $ (u_0,v_0)=S(0)$. Then since we assume $m_0(x)=0$, we have $(u_0,v_0)=(0,0)$. (The reason why we keep using $m_0(x)$ instead of $0$ in this proof is that we want to show it can be used to show Theorem $\ref{local_wellpose1}$.) 
	
	Since $S$ is Lipschitz, we know that there exist constants $C>0$ such that 
	\begin{equation*}
			\|(u,m)\|_{ C^{2+\alpha,1+\frac{\alpha}{2}}(Q)^2}\\
			\leq C\|\psi(x)\|_{B_{\delta}(\Omega)} 
	\end{equation*}

	The proof is complete. 
	
\end{proof}

We still need to show the local well-posedness in the case that the running cost $F$ is in the form of an integral. This is stated in the following theorems.

\begin{thm}\label{local_wellpose2}
	Given $\psi(x)=0$. Suppose that $F\in\mathcal{B}$ . The following results hold:
	\begin{enumerate}		
		\item[(a)]
		There exist constants $\delta>0$ and $C>0$ such that for any 
		\[
		m_0\in B_{\delta}(C^{2+\alpha}(\Omega)) :=\{m_0\in C^{2+\alpha}(\Omega): \|m_0\|_{C^{2+\alpha}(\Omega)}\leq\delta \},
		\]
		the MFG system $\eqref{main}$ has a solution $(u,m)\in
		[C^{2+\alpha,1+\frac{\alpha}{2}}(Q)]^2$ which satisfies
		\begin{equation}
			\|(u,m)\|_{ C^{2+\alpha,1+\frac{\alpha}{2}}(Q)}:= \|u\|_{C^{2+\alpha,1+\frac{\alpha}{2}}(Q)}+ \|m\|_{C^{2+\alpha,1+\frac{\alpha}{2}}(Q)}\leq C\|m_0\|_{ C^{2+\alpha}(\Omega)}.
		\end{equation}
		Furthermore, the solution $(u,m)$ is unique within the class
		\begin{equation}
			\{ (u,m)\in  C^{2+\alpha,1+\frac{\alpha}{2}}(Q)\times C^{2+\alpha,1+\frac{\alpha}{2}}(Q): \|(u,m)\|_{ C^{2+\alpha,1+\frac{\alpha}{2}}(Q)}\leq C\delta \}.
		\end{equation}		
		\item[(b)] Define a mapping 
		\[
		S: B_{\delta}(C^{2+\alpha}(\Omega))\to C^{2+\alpha,1+\frac{\alpha}{2}}(Q)\times C^{2+\alpha,1+\frac{\alpha}{2}}(Q)\ \mbox{by $S(m_0):=(u,m)$},
		\] 
		where $(u,m)$ is the unique solution to the MFG system \eqref{main}.
		Then for any $m_0\in B_{\delta}(C^{2+\alpha}(\Omega))$, $S$ is holomorphic at $m_0$.
	\end{enumerate}
\end{thm}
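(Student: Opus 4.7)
The plan is to mirror the implicit function theorem argument in the proof of Theorem \ref{local_wellpose1'}, replacing the holomorphic pointwise nonlinearity $F(x,m)$ by the non-local integral operator $m\mapsto \int_\Omega K(\cdot,y)m(y,\cdot)\,dy$. Concretely, I set
\begin{align*}
Y_1 &:= \{\phi \in C^{2+\alpha}(\Omega) : \p_{\nu} \phi = 0\},\\
Y_2 &:= \{(u,m) \in [C^{2+\alpha, 1+\alpha/2}(Q)]^2 : \p_{\nu} u = \p_{\nu} m = 0 \text{ on } \Sigma\},\\
Y_3 &:= Y_1 \times Y_1 \times C^{\alpha, \alpha/2}(Q) \times C^{\alpha, \alpha/2}(Q),
\end{align*}
and define $\mathscr{K}: Y_1 \times Y_2 \to Y_3$ by
\begin{align*}
\mathscr{K}(m_0,\tilde u,\tilde m) := \Big(&\tilde u(x,T),\ \tilde m(x,0) - m_0(x),\\
&-\p_t \tilde u - \beta \Delta \tilde u + \tfrac{1}{2}\kappa(x) |\nabla \tilde u|^2 - \int_\Omega K(x,y)\tilde m(y,t)\,dy,\\
&\p_t \tilde m - \beta \Delta \tilde m - \mathrm{div}\big(\tilde m \kappa(x)\nabla \tilde u\big)\Big),
\end{align*}
so that $\mathscr{K}(m_0,\tilde u,\tilde m)=0$ is exactly the MFG system \eqref{main} with $\psi=0$ and initial datum $m_0$.

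Well-definedness of $\mathscr{K}$ is routine: the terms $\kappa|\nabla \tilde u|^2$ and $\mathrm{div}(\tilde m\kappa\nabla \tilde u)$ lie in $C^{\alpha,\alpha/2}(Q)$ because the H\"older space is an algebra, while smoothness of $K$ on $\Omega\times\Omega$ ensures that $\int_\Omega K(x,y)\tilde m(y,t)\,dy \in C^{\alpha,\alpha/2}(Q)$ (in fact with much higher regularity in $x$). Holomorphy of $\mathscr{K}$ is actually easier here than in Theorem \ref{local_wellpose1'}, since the only nonlinear terms are the quadratic pair $|\nabla \tilde u|^2$ and $\tilde m\nabla \tilde u$. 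Consequently $\mathscr{K}$ is a polynomial of total degree two in $(\tilde u,\tilde m)$ and affine in $m_0$, hence entire; one thereby avoids the Cauchy-estimate argument in \eqref{eq:F1}--\eqref{eq:F2} that was needed to sum the power series of $F\in\mathcal{A}$.

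A direct computation gives
\begin{align*}
\nabla_{(\tilde u,\tilde m)} \mathscr{K}(0,0,0)(u,m) = \Big(&u(x,T),\ m(x,0),\\
&-\p_t u - \beta\Delta u - \int_\Omega K(x,y) m(y,t)\,dy,\\
&\p_t m - \beta\Delta m\Big),
\end{align*}
since the quadratic terms vanish to first order at $0$. To establish that this linearisation is an isomorphism from $Y_2$ onto $Y_3$, I would, given any target $(r,r',s,s')\in Y_3$, first solve the forward Neumann heat problem $\p_t m - \beta\Delta m = s'$ with $m(\cdot,0)=r'$ and $\p_{\nu} m=0$ uniquely in $C^{2+\alpha,1+\alpha/2}(Q)$ by standard parabolic Schauder theory, and then substitute this $m$ into the backward Neumann problem $-\p_t u - \beta\Delta u = s + \int_\Omega K(\cdot,y)m(y,\cdot)\,dy$ with $u(\cdot,T)=r$ and $\p_{\nu} u=0$, which is again uniquely solvable; injectivity follows by the same two-step reasoning. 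The implicit function theorem then yields $\delta > 0$ and a unique holomorphic $S: B_\delta(Y_1) \to Y_2$ with $\mathscr{K}(m_0, S(m_0)) = 0$, and since $S(0)=(0,0)$, local Lipschitz continuity of $S$ produces the quantitative estimate $\|(u,m)\|_{C^{2+\alpha,1+\alpha/2}(Q)} \leq C\|m_0\|_{C^{2+\alpha}(\Omega)}$. The only point requiring care, rather than an obstacle, is to confirm that the non-local forcing $\int_\Omega K(\cdot,y)m(y,t)\,dy$ carries adequate time regularity --- inherited from $m\in C^{2+\alpha,1+\alpha/2}(Q)$ --- so that it lands in $C^{\alpha,\alpha/2}(Q)$; this is precisely where the smoothness hypothesis on $K$ in Definition \ref{admi 2} is used.
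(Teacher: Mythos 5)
Your proposal is correct and follows essentially the same route as the paper: the same Banach-space setup, the same map $\mathscr{K}$ encoding the system, verification that its linearisation at the origin is an isomorphism by solving the decoupled forward Neumann heat problem for $m$ and then the backward problem for $u$ with the non-local source, and the implicit function theorem to obtain the holomorphic solution map and the Lipschitz estimate. Your added observations (that holomorphy is immediate since the nonlinearity is quadratic, so no Cauchy-estimate summation is needed, and that the smoothness of $K$ guarantees the non-local term lands in $C^{\alpha,\alpha/2}(Q)$) are exactly the points the paper's terse proof implicitly relies on.
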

Similarly, we have
\begin{thm}\label{local_wellpose2'}
	Given $m_0(x)=0$. Suppose that $F\in\mathcal{A}$ . The following results hold:
	\begin{enumerate}		
		\item[(a)]
		There exist constants $\delta>0$ and $C>0$ such that for any 
		\[
		\psi(x)\in B_{\delta}(C^{2+\alpha}(\Omega)) :=\{\psi\in C^{2+\alpha}(\Omega): \|\psi(x)\|_{C^{2+\alpha}(\Omega)}\leq\delta \},
		\]
		the MFG system $\eqref{main}$ has a solution $(u,m)\in
		[C^{2+\alpha,1+\frac{\alpha}{2}}(Q)]^2$ which satisfies
		\begin{equation}
			\|(u,m)\|_{ C^{2+\alpha,1+\frac{\alpha}{2}}(Q)}:= \|u\|_{C^{2+\alpha,1+\frac{\alpha}{2}}(Q)}+ \|m\|_{C^{2+\alpha,1+\frac{\alpha}{2}}(Q)}\leq C\|\psi(x)\|_{ C^{2+\alpha}(\Omega)}.
		\end{equation}
		Furthermore, the solution $(u,m)$ is unique within the class
		\begin{equation}
			\{ (u,m)\in  C^{2+\alpha,1+\frac{\alpha}{2}}(Q)\times C^{2+\alpha,1+\frac{\alpha}{2}}(Q): \|(u,m)\|_{ C^{2+\alpha,1+\frac{\alpha}{2}}(Q)}\leq C\delta \}.
		\end{equation}		
		\item[(b)] Define a mapping
		\[
		S: B_{\delta}(C^{2+\alpha}(\Omega))\to C^{2+\alpha,1+\frac{\alpha}{2}}(Q)\times C^{2+\alpha,1+\frac{\alpha}{2}}(Q)\ \mbox{by $S(\psi(x)):=(u,m)$}. 
		\] 
		where $(u,m)$ is the unique solution to the MFG system \eqref{main}.
		Then for any $\psi(x)\in B_{\delta}(C^{2+\alpha}(\Omega))$, $S$ is holomorphic at $\psi$.
	\end{enumerate}
\end{thm}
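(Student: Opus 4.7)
My plan is to follow the template of the proof of Theorem \ref{local_wellpose1'} verbatim, with the only substantive modification being that the local nonlinear term $F(x,\tilde m)$ is replaced by the non-local operator $\int_\Omega K(x,y)\tilde m(y,t)\,dy$ (reading the admissible class as $\mathcal{B}$ to match the preceding Theorem \ref{local_wellpose2}). I would introduce the same spaces
\[
Y_1 := \{\psi \in C^{2+\alpha}(\Omega) : \partial_\nu \psi = 0\}, \quad Y_2 := \{(u,m) \in [C^{2+\alpha,1+\alpha/2}(Q)]^2 : \partial_\nu u = \partial_\nu m = 0 \text{ on }\Sigma\},
\]
\[
Y_3 := Y_1 \times Y_1 \times C^{\alpha,\alpha/2}(Q) \times C^{\alpha,\alpha/2}(Q),
\]
and define $\mathscr{K}: Y_1 \times Y_2 \to Y_3$ whose third slot is $-\partial_t\tilde u - \beta\Delta\tilde u + \tfrac12\kappa|\nabla\tilde u|^2 - \int_\Omega K(x,y)\tilde m(y,t)\,dy$ and fourth slot is $\partial_t\tilde m - \beta\Delta\tilde m - \div(\tilde m\,\kappa\nabla\tilde u)$, with the first two slots recording the terminal/initial data as before.

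For well-definedness, the only new ingredient is the non-local term. Since $K \in C^\infty(\Omega\times\Omega)$, differentiating under the integral sign shows $x\mapsto\int_\Omega K(x,y)\tilde m(y,t)\,dy$ is $C^\infty$ in $x$, while Hölder continuity of exponent $\alpha/2$ in $t$ is inherited pointwise from $\tilde m$ via dominated convergence, so the map $\tilde m\mapsto\int_\Omega K(\cdot,y)\tilde m(y,\cdot)\,dy$ is bounded from $C^{\alpha,\alpha/2}(Q)$ into itself. Holomorphicity of $\mathscr{K}$ is in fact cleaner than in the analytic case: each component is a polynomial (at worst quadratic) in its three arguments, the non-local contribution being linear in $\tilde m$, so every directional slice $\lambda\mapsto\mathscr{K}((\psi,\tilde u,\tilde m)+\lambda(\bar\psi,\bar u,\bar m))$ is polynomial in $\lambda$ and therefore entire; no power-series estimate analogous to \eqref{eq:F1}--\eqref{eq:F2} is needed.

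Since $\mathscr{K}(0,0,0)=0$, the partial Fréchet derivative at the origin is
\[
\nabla_{(\tilde u,\tilde m)}\mathscr{K}(0,0,0)(u,m) = \Bigl(u|_{t=T},\; m|_{t=0},\; -\partial_t u-\beta\Delta u-\textstyle\int_\Omega K(x,y)m(y,t)\,dy,\; \partial_t m-\beta\Delta m\Bigr).
\]
Given arbitrary data $(r',r,s',s)\in Y_3$, the last two components form a forward linear parabolic Neumann problem for $m$ alone, uniquely solvable in $C^{2+\alpha,1+\alpha/2}(Q)$ by classical Schauder theory. With this $m$ in hand, $\int_\Omega K(x,y)m(y,t)\,dy$ becomes a known $C^{\alpha,\alpha/2}(Q)$-source, so the first and third components constitute a backward linear parabolic Neumann problem for $u$, again uniquely solvable in $C^{2+\alpha,1+\alpha/2}(Q)$. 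Hence $\nabla_{(\tilde u,\tilde m)}\mathscr{K}(0,0,0)$ is a bounded linear bijection $Y_2\to Y_3$, and by the open mapping theorem it is an isomorphism.

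The holomorphic implicit function theorem then produces $\delta>0$ and a unique holomorphic map $S: B_\delta(C^{2+\alpha}(\Omega))\to Y_2$ with $\mathscr{K}(\psi,S(\psi))=0$, which gives statement (b). Since $m_0\equiv 0$ and $\psi\equiv 0$ yield the trivial solution $(u,m)=(0,0)$, we have $S(0)=(0,0)$; Lipschitz continuity of $S$ at $0$ yields the norm bound $\|(u,m)\|_{C^{2+\alpha,1+\alpha/2}(Q)}\le C\|\psi\|_{C^{2+\alpha}(\Omega)}$ and uniqueness in the ball. I anticipate no genuine obstacle here; if anything, the non-local case is slightly easier than Theorem \ref{local_wellpose1'}, because linearity of $F$ in $m$ removes the need to control a convergent power series in $C^{\alpha,\alpha/2}(Q)$. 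The one point worth care is the Hölder-in-$t$ regularity of the non-local integral, which is handled by the dominated-convergence argument mentioned above.
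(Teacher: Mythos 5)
Your proposal is correct and follows essentially the same route as the paper: the authors prove Theorem \ref{local_wellpose2} by the implicit-function-theorem scheme of Theorem \ref{local_wellpose1'} with the non-local term $\int_\Omega K(x,y)\tilde m(y,t)\,dy$ in place of the power series, note that linearity in $\tilde m$ and smoothness of $K$ make well-definedness and the isomorphism property of $\nabla_{(\tilde u,\tilde m)}\mathscr{K}(0,0,0)$ immediate, and then declare the proof of Theorem \ref{local_wellpose2'} analogous, which is exactly the combination you carried out. Your reading of the hypothesis ``$F\in\mathcal{A}$'' as $F\in\mathcal{B}$ matches the paper's evident intent (the theorem is introduced as covering the integral-form running cost), and your extra details on the forward-then-backward Schauder solvability of the linearized system are consistent with the paper's argument.
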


The proofs of the above two theorems are similar and we only give that of Theorem $\ref{local_wellpose2}$. 

\begin{proof}[Proof of Theorem $\ref{local_wellpose2}$]
		Let 
	\begin{align*}
		&Z_1:= \{m\in C^{2+\alpha}(\Omega ) :\p_{\nu}m=0      \}, \\
		&Z_2:\{(u,m)\in C^{2+\alpha,1+\frac{\alpha}{2}}(Q)\times C^{2+\alpha,1+\frac{\alpha}{2}}(Q):\p_{\nu}m=\p_{\nu}u=0  \text{ in }\Sigma\},\\
		&Z_3:=Z_1\times Z_1\times C^{\alpha,\frac{\alpha}{2}}(Q )\times C^{\alpha,\frac{\alpha}{2}}(Q ),
	\end{align*} 
	and define a map $\mathscr{K}:Z_1\times Z_2 \to Z_3$ by that for any $(m_0(x),\tilde u,\tilde m)\in Y_1\times Y_2$,
	\begin{align*}
		&
		\mathscr{K}( m_0(x),\tilde u,\tilde m)(x,t)\\
		:=&\big( 0, \tilde m(x,0) , 
		-\p_t\tilde u(x,t)-\beta\Delta \tilde u(x,t)\\ &+\frac{\kappa(x)|\nabla \tilde u(x,t)|^2}{2}- F(x,\tilde m(x,t)), 
		\p_t \tilde m(x,t)-\beta\Delta \tilde m(x,t)-{\rm div}(\tilde m(x,t)\kappa(x)\nabla \tilde u(x,t))  \big) .
	\end{align*}
  We can make use of the argument similar to that in the proof of Theorem $\ref{local_wellpose1}$. Since $F\in\mathcal{B}$ and $K(x,y)$ are smooth, it is clear that $\mathscr{K}$ is well-defined. To complete the proof, it suffices to show that
  	\begin{equation}\label{Fer diff'}
  	\begin{aligned}
  		\nabla_{(\tilde u,\tilde m)} \mathscr{K}(0,0,0) (u,m) =(u_{t=T}, m_{t=0}, 
  		-\p_tu-\beta\Delta u-\int_{\Omega}K(x,y)m(y,t)dy, \p_t m-\beta\Delta m),
  	\end{aligned}			
  \end{equation}
is a linear isomorphism between $Z_2$ and $Z_3$, which readily follows from the fact that  $K(x,y)$ is smooth. 

The proof is complete. 
	
\end{proof}

\section{Analysis of the linearized systems in two different scenarios }\label{analysis of lin}
We next develop a high-order linearization/variation scheme of the MFG system \eqref{main}. High-order Linearization method is already a proven skills to study non-linear PDEs. One may refer to \cite{LLLZ} for the parabolic equations case. Since we consider the case that $m\in\mathcal{P}(\Omega)$, one may develop similar Linearization process in $\Omega$ \cite{num_boundary}(see section 5). We give a brief introduction here.

\begin{defi}\label{def_der_1}
	Let $U :\mathcal{P}(\Omega)\to\mathbb{R}$. We say that $U$ is of class $C^1$ if there exists a continuous map $K:  \mathcal{P}(\Omega)\times \Omega\to\mathbb{R}$ such that, for all $m_1,m_2\in\mathcal{P}(\Omega) $ we have
	\begin{equation}\label{derivation}
		\lim\limits_{s\to 0^+}\frac{U\big(m_1+s(m_2-m_1)-U(m_1)\big)}{s}=\int_{\Omega} K(m_1,x)d(m_2-m_1)(x).
	\end{equation}
\end{defi}
Note that the definition of $K$ is up to additive constants. We define  the derivative
$\dfrac{\delta U}{\delta m}$ as the unique map $K$ satisfying $\eqref{derivation}$ and 
\begin{equation}
	\int_{\Omega} K(m,x) dm(x)=0.
\end{equation}
Similarly, we can define higher order derivatives of $U$, and we refer to \cite{high_order_der} for more related discussion.

Notice that if $F\in\mathcal{A}$ , then $F$ depends on the distribution $m$ locally. We have that 
$$
\dfrac{\delta F}{ \delta m}(x,0)(\rho(x,t)):=\left<\dfrac{\delta F}{ \delta m}(x,0,\cdot),\rho(x,t)\right>_{L^2}=
F^{(1)}(x)\rho(x,t),
$$ 
up to a constant. If $F\in\mathcal{B}$, we have
$$
\dfrac{\delta F}{ \delta m}(x,0)(\rho(x,t)):=\left<\dfrac{\delta F}{ \delta m}(x,0,\cdot),\rho(x,t)\right>_{L^2}=
\int_{\Omega}K(x,y)\rho(y,t)dy.
$$ 
up to a constant. 

We consider this linearization process for input $\psi$ and $m_0$. If we fixed $m_0$, then it is just classical linearization process. When one fix $\psi$, we have to work in $\mathcal{P}(\Omega)$ and we use the definition above.
 
\subsection{Higher-order linearization}\label{HLM}
We introduce the basic setting of this higher order
linearization method. Consider the system $\eqref{main}$. Let 
$$\psi(x;\varepsilon)=\varepsilon_1f_1+\varepsilon_2f_2,$$
where 
\[
f_l\in C^{2+\alpha}(\mathbb{R}^n),
\]
and $\varepsilon=(\varepsilon_1,\varepsilon_2)\in\mathbb{R}^2$ with 
$|\varepsilon|=|\varepsilon_1|+|\varepsilon_2|$ small enough. 

By Theorem $\ref{local_wellpose1'}$, there exists a unique solution $(u(x,t;\varepsilon),m(x,t;\varepsilon) )$ of $\eqref{main}$. If $\varepsilon=0$ and $m_0(x)=0$ by our assumption, we have $(u(x,t;0),m(x,t;0) )= (0,0)$. Theorem $\ref{local_wellpose1'}$ also implies that $u(x,t;\varepsilon),m(x,t;\varepsilon)$ are infinitely many differentiable with respect to $\psi.$ Let $S$ be the solution operator of $ \eqref{main}$ with respect to $m_0=0$. Then there exists a bounded linear operator $A$ form $C^{2+\alpha}(\Omega)$ to $[C^{1+\frac{\alpha}{2},2+\alpha}(Q)]^2$ such that
\begin{equation}
	\lim\limits_{\|\psi\|_{C^{2+\alpha}(\Omega)}\to0}\frac{\|S(\psi)-S(0)- A(\psi)\|_{[C^{1+\frac{\alpha}{2},2+\alpha}(Q)]^2}}{\|\psi\|_{C^{2+\alpha(\Omega)}}}=0.
\end{equation} 
If we consider $\varepsilon_2=0$ and fix $f_1$, then it is easy to check that $A(\psi)\Big|_{\varepsilon_1=0}$ is the solution map of the following system which is called the first-order linearization system:
\begin{equation}\label{linear l=1,eg}
	\begin{cases}
	-\p_t u^{(1)}-\beta\Delta u^{(1)}= \int_{\Omega}K(x,y)m^{(1)}(y,t)dy ,  & \text{ in }  Q,\\
	\p_t m^{(1)}(x,t)-\beta\Delta m^{(1)}(x,t)=0, & \text{ in }  Q,\\
	\p_{\nu} u^{(1)}(x,t)=\p_{\nu}m^{(1)}(x,t)=0      &\text{ on } \Sigma,\medskip\\
	u^{(1)}(x,T)=f_1, & \text{ in } \Omega,\medskip\\
	m^{(1)}(x,0)=0, & \text{ in } \Omega.\\
\end{cases}
\end{equation}
In the following, we define
\begin{equation}\label{eq:ld1}
 (u^{(1)}, m^{(1)} ):=A(\psi)\Big|_{\varepsilon_1=0}. 
 \end{equation}
For notational convenience, we write
\begin{equation}\label{eq:ld2}
u^{(1)}=\p_{\varepsilon_1}u(x,t;\varepsilon)|_{\varepsilon=0}\quad\mbox{and}\quad m^{(1)}=\p_{\varepsilon_1}m(x,t;\varepsilon)|_{\varepsilon=0}.
\end{equation}
We shall utilise such notations in our subsequent discussion in order to simplify the exposition and their meaning should be clear from the context. In a similar manner, we can define $u^{(2)}:=\p_{\varepsilon_2}u|_{\varepsilon=0},$
and $m^{(2)}:=\p_{\varepsilon_2}m|_{\varepsilon=0},$
and obtain a similar linearised system.  

Next, we consider 
\begin{equation}\label{eq:ht1}
	u^{(1,2)}:=\p_{\varepsilon_1}\p_{\varepsilon_2}u|_{\varepsilon=0},
	m^{(1,2)}:=\p_{\varepsilon_1}\p_{\varepsilon_2}m|_{\varepsilon=0}.
\end{equation}
Similarly, $( u^{(1,2)},m^{(1,2)})$ can be viewed as the output of the second-order Fr\'echet derivative of $S$ at a specific point. By following similar calculations in deriving \eqref{linear l=1,eg}, one can show that the second-order linearization is given as follows:
\begin{equation}\label{linear l=1,2 eg}
	\begin{cases}
		-\p_tu^{(1,2)}-\beta\Delta u^{(1,2)}(x,t)+\kappa(x)\nabla u^{(1)}\cdot \nabla u^{(2)}\medskip\\
		\hspace*{3cm}= \int_{\Omega}K(x,y)m^{(1,2)}(y,t)dy,& \text{ in } \Omega\times(0,T),\medskip\\
		\p_t m^{(1,2)}-\beta\Delta m^{(1,2)}= {\rm div} (m^{(1)}\kappa(x)\nabla u^{(2)})+{\rm div}(m^{(2)}\kappa(x)\nabla u^{(1)}) ,&\text{ in } \Omega\times (0,T),\medskip\\
		\p_{\nu} u^{(1,2)}(x,t)=\p_{\nu}m^{(1,2)}(x,t)=0      &\text{ on } \Sigma,\medskip\\
		u^{(1,2)}(x,T)=0,& \text{ in } \Omega,\medskip\\
		m^{(1,2)}(x,0)=0, & \text{ in } \Omega.\\
	\end{cases}  	
\end{equation}
Notice that the non-linear terms of the system $\eqref{linear l=1,2 eg}$ depend on the first-order linearized system $\eqref{linear l=1,eg}$. We will utilize this relationship to determine $\kappa(x)$ in the proof of Theorem $\ref{der F2}$. Through this method, we can establish three systems, with the initial values of the first two systems, namely $f_1$ and $f_2$, being arbitrary functions.  Hence, we will focus on the solutions of the first two systems.

By a similar deduction, we can define the high-order linearization scheme of the MFG system $\eqref{main}$ in the case that $F(x,m)\in\mathcal{A}$. For the subsequent use, we only show the main equations in what follows.

The first-order linearization system is given by
\begin{equation}\label{linear l=1,eg'}
	\begin{cases}
		-\p_t u^{(1)}-\beta\Delta u^{(1)}= F^{(1)}(x)m^{(1)}(x,t) ,  & \text{ in }  Q,\\
		\p_t m^{(1)}(x,t)-\beta\Delta m^{(1)}(x,t)=0, & \text{ in }  Q,\\
		\p_{\nu} u^{(1)}(x,t)=\p_{\nu}m^{(1)}(x,t)=0      &\text{ on } \Sigma,\medskip\\
		u^{(1)}(x,T)=f_1, & \text{ in } \Omega,\medskip\\
		m^{(1)}(x,0)=0, & \text{ in } \Omega.\\
	\end{cases}
\end{equation}
The second-order linearization system is given by
\begin{equation}\label{linear l=1,2 eg'}
	\begin{cases}
		-\p_tu^{(1,2)}-\beta\Delta u^{(1,2)}(x,t)+\kappa(x)\nabla u^{(1)}\cdot \nabla u^{(2)}\medskip\\
		\hspace*{3cm}= F^{(1)}m^{(1,2)}(x,t)+F^{(2)}m^{(1)}(x,t)m^{(2)}(x,t),& \text{ in } \Omega\times(0,T),\medskip\\
		\p_t m^{(1,2)}-\beta\Delta m^{(1,2)}= {\rm div} (m^{(1)}\kappa(x)\nabla u^{(2)})+{\rm div}(m^{(2)}\kappa(x)\nabla u^{(1)}) ,&\text{ in } \Omega\times (0,T),\medskip\\
		\p_{\nu} u^{(1,2)}(x,t)=\p_{\nu}m^{(1,2)}(x,t)=0      &\text{ on } \Sigma,\medskip\\
		u^{(1,2)}(x,T)=0,& \text{ in } \Omega,\medskip\\
		m^{(1,2)}(x,0)=0, & \text{ in } \Omega.\\
	\end{cases}  	
\end{equation}
By following a similar manner in defining the first-order variation $u^{(1)}$ and $m^{(1)}$ in \eqref{eq:ld1}--\eqref{eq:ld2}, and for any $N\in\mathbb{N}$, we can consider the high-order variations: 
\begin{equation*}
	u^{(1,2...,N)}=\p_{\varepsilon_1}\p_{\varepsilon_2}...\p_{\varepsilon_N}u|_{\varepsilon=0},
\end{equation*}
\begin{equation*}
	m^{(1,2...,N)}=\p_{\varepsilon_1}\p_{\varepsilon_2}...\p_{\varepsilon_N}m|_{\varepsilon=0}.
\end{equation*}
By following this approach, we can generate a sequence of parabolic systems that will again aid in calculating the higher-order Taylor coefficients of the unknown function $F$. A crucial aspect is that the non-linear terms in higher-order systems rely solely on the solutions of lower-order terms. This allows us to employ mathematical induction in the proof of Theorem $\\ref{der F}$.

\subsection{ High-order variation }\label{sect:hvm}
We have further refined a linearization technique to handle the positivity constraint on the agent population density $m$. To the best of our knowledge, we are the pioneers in proposing and implementing this method to address an inverse problem. This approach not only effectively addresses the positivity constraint in inverse problems but also proves beneficial in scenarios where the running cost of the MFG system is non-locally dependent on $m$.

Consider the system $\eqref{main}$. Let 
$$m_0(x;\varepsilon)=\varepsilon g_1+\varepsilon^2 g_2,$$
where 
\[
g_1, g_2\in C^{2+\alpha}(\mathbb{R}^n)\quad, \quad g_1\geq 0,
\]
and $\varepsilon\in\mathbb{R}^+$. 
Then we have $m_0\geq 0$ in $\Omega$ if $\varepsilon$ tends to $0$.
By Theorem $\ref{local_wellpose2}$, there exists a unique solution $(u(x,t;\varepsilon),m(x,t;\varepsilon) )$ of $\eqref{main}$. If $\varepsilon=0$ and $\psi(x)=0$ , we have $(u(x,t;0),m(x,t;0) )= (0,0).$ 

Let
$$u^{(I)}:=\p_{\varepsilon}u|_{\varepsilon=0},
m^{(I)}:=\p_{\varepsilon}m|_{\varepsilon=0}.$$ 

By direct computations, we can obtain the first-order linearization system as follows:
\begin{equation}
		\begin{cases}
			-\p_t u^{(I)}-\beta\Delta u^{(I)}= \int_{\Omega}K(x,y)m^{(I)}(y,t)dy ,  & \text{ in }  Q,\\
			\p_t m^{(I)}(x,t)-\beta\Delta m^{(I)}(x,t)=0, & \text{ in }  Q,\\
			\p_{\nu} u^{(I)}(x,t)=\p_{\nu}m^{(I)}(x,t)=0      &\text{ on } \Sigma,\medskip\\
			u^{(I)}(x,T)=0, & \text{ in } \Omega,\medskip\\
			m^{(I)}(x,0)=g_1, & \text{ in } \Omega.\\
		\end{cases}	
\end{equation}

In fact, we simply compute the linearization system 5.1 as referenced in \cite{num_boundary} in our specific scenario. So far, this method mirrors the first one. The significant difference lies in our focus on the second-order linearization system instead of the first-order system. In this context, the flexibility of $g_2$ plays a crucial role in tackling the difficulties posed by the positivity constraint in the inverse problem.

Let
$$u^{(II)}:=\p^2_{\varepsilon}u|_{\varepsilon=0},\quad m^{(II)}:=\p^2_{\varepsilon}m|_{\varepsilon=0}.$$
One can obtain that the second-order linearization system:
\begin{equation}
	\begin{cases}
		-\p_t u^{(II)}-\beta\Delta u^{(II)}+\kappa(x)|\nabla u^{(I)}|^2= \int_{\Omega}K(x,y)m^{(II)}(y,t)dy ,  & \text{ in }  Q,\\
		\p_t m^{(II)}(x,t)-\beta\Delta m^{(II)}(x,t)=2\div(m^{(I)}\kappa\nabla u^{(I)}), & \text{ in }  Q,\\
		\p_{\nu} u^{(II)}(x,t)=\p_{\nu}m^{(II)}(x,t)=0      &\text{ on } \Sigma,\medskip\\
		u^{(II)}(x,T)=0, & \text{ in } \Omega,\medskip\\
		m^{(II)}(x,0)=2g_2, & \text{ in } \Omega.\\
	\end{cases}	
\end{equation}

Similar to the observation in Section~\ref{HLM} for the first method, the nonlinear terms depend only on the solution of the first order linearization system. Notice that we only obtain two systems by this method, but the initial value of $m_j^{(II)}(x,0)$ is freely chosen as an arbitrary input. We shall use this fact to recover $K(x,y)$ in the proof of Theorem $\ref{der F2}$.

\section{Proofs of Theorems ~\ref{der F} and ~\ref{der F2} }
\subsection{The main proofs}
We begin with a critical auxiliary lemma.  
\begin{lem}\label{E-F is complete}
	Consider
	\begin{equation}\label{eigenfunction}
		\begin{cases}
			-\p_t u-\beta\Delta u=0  &\text{ in } Q\\
			\p_{\nu} u(x,t)=0   &\text{ in } \Sigma\\
		\end{cases}
	\end{equation}
	There exist a sequence of solution $u(x,t)$ of system $\eqref{eigenfunction}$ such that
	\begin{enumerate}
	\item[(1)] $u(x,t)=e^{\lambda t}g(x;\lambda)$ for some $\lambda\in\mathbb{R}^n$ and $g(x;\lambda)\in C^2(\Omega)$;
	\item[(2)] There does not exit an open subset $U$ of $\Omega$ such that $\nabla g(x;\lambda)= 0$ in $U$. 
	\end{enumerate}
\end{lem}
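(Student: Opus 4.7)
The natural approach is separation of variables. I would set $u(x,t)=e^{\lambda t}g(x)$ for an unknown $\lambda\in\mathbb{R}$ and $g\in C^2(\overline{\Omega})$ with $\partial_\nu g=0$ on $\partial\Omega$. Substituting into \eqref{eigenfunction} yields
\[
-\lambda e^{\lambda t} g(x)-\beta e^{\lambda t}\Delta g(x)=0,
\]
so one is reduced to the Neumann eigenvalue problem
\[
-\Delta g=\tfrac{\lambda}{\beta}\, g\ \ \text{in }\Omega,\qquad \partial_\nu g=0\ \ \text{on }\partial\Omega.
\]
It is classical that for the bounded (Lipschitz) domain $\Omega$ this problem admits an increasing sequence of eigenvalues $0=\mu_0<\mu_1\le\mu_2\le\dots\to\infty$ with an associated $L^2$-orthonormal basis of eigenfunctions $\{g_k\}_{k\ge 0}$. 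Setting $\lambda_k:=\beta\mu_k$ and $u_k(x,t):=e^{\lambda_k t}g_k(x)$ produces the required sequence in the form claimed by item (1).

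The crux of the lemma is therefore item (2): I have to show that for every $k\ge 1$ the eigenfunction $g_k$ has a gradient which vanishes on no open subset of $\Omega$. The plan is to invoke interior elliptic regularity: since the operator $-\Delta$ has constant (in particular real-analytic) coefficients, standard interior regularity/analyticity theorems (e.g.\ Friedman or Morrey) give $g_k\in C^\omega(\Omega)$. Suppose, towards a contradiction, that $\nabla g_k\equiv 0$ on some nonempty open set $U\subset\Omega$. Then $g_k$ is constant on $U$, so its partial derivatives of all orders vanish at some point $x_0\in U$. By the identity principle for real-analytic functions on the connected open set $\Omega$, this forces $g_k$ to be constant throughout $\Omega$. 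But a constant satisfies $-\Delta g_k=0$, which combined with the eigenvalue equation gives $\mu_k g_k\equiv 0$; since $g_k\not\equiv 0$, this forces $\mu_k=0$, contradicting $k\ge 1$.

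Thus every $u_k$ with $k\ge 1$ satisfies both (1) and (2), and the infinite family $\{u_k\}_{k\ge 1}$ provides the sequence required by the lemma. The main technical point is the analytic continuation step just described; the rest is routine separation of variables and classical Neumann spectral theory for the Laplacian on a bounded domain. A minor remark: the statement ``$\lambda\in\mathbb{R}^n$'' in item (1) is evidently a typo for $\lambda\in\mathbb{R}$ (in fact $\lambda=\beta\mu_k>0$), and no change to the argument is needed.
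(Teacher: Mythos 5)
Your proposal is correct and follows essentially the same route as the paper: separate variables, take $u_k(x,t)=e^{\lambda_k t}g_k(x)$ with $g_k$ a Neumann eigenfunction of $-\Delta$ and $\lambda_k=\beta\mu_k$, and rule out $\nabla g_k\equiv 0$ on an open set by showing this would force the eigenvalue to vanish. The only difference is that you spell out the unique-continuation step (interior real-analyticity plus the identity principle) that the paper's brief contradiction argument leaves implicit, which if anything makes the argument more complete; your remark that $\lambda\in\mathbb{R}^n$ is a typo for $\lambda\in\mathbb{R}$ is also correct.
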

\begin{proof}
	Let $\dfrac{\lambda}{\beta}$  be an Neumann Laplacian eigenvalue and $g(x;\lambda)$ be a corresponding eigenfunction; that is
	\begin{equation}
		\begin{cases}
			-\Delta g(x;\lambda)=\dfrac{\lambda}{\beta} g(x;\lambda) &\text{ in } \Omega\\
			\p_{\nu} g(x;\lambda)=0     &\text{ in } \Sigma.
		\end{cases}
	\end{equation}
	Then it is obvious that $u(x,t)=e^{\lambda t}g(x;\lambda) $ is a solution of $\eqref{eigenfunction}$. Furthermore, if we suppose there is an open subset $U$ of $\Omega$ such that $\nabla g=0$ in $U$, then $g$ in a constant in $U$. This implies that $\lambda=0.$ It is a contradiction.
	
 The proof is complete. 
\end{proof}

We are now in a position to present the proofs of Theorems~\ref{der F} and \ref{der F2} .
 
\begin{proof}[ Proof of Theorem $\ref{der F}$ ]
		For $j=1,2$, let us consider 
	\begin{equation}\label{MFG 1,2'}
		\begin{cases}
			-u_t-\beta\Delta u+\frac{1}{2}\kappa_j|\nabla  u|^2= F_j(x,m) & \text{ in } Q,\medskip\\
			m_t-\beta\Delta m-\div (m\kappa_j\nabla u)=0  & \text{ in } Q, \medskip\\
			\p_{\nu}u(x,t)=\p_{\nu}m(x,t)=0     & \text{ on } \Sigma, \medskip\\
			u(x,T)=\psi(x)     & \text{ in } \Omega,\medskip\\
			m(x,0)=m_0(x) & \text{ in } \Omega.\\
		\end{cases}
	\end{equation}
	Next, we divide our proof into four steps. 
	
	\medskip
	\noindent {\bf Step I.} We first show $\kappa_1=\kappa_2$. Let $\psi(x)=\varepsilon_1f_1+\varepsilon_2f_2$, $m_0= 0.$ 
	Set
	$$u^{(1)}:=\p_{\varepsilon_1}u|_{\varepsilon=0}=\lim\limits_{\varepsilon\to 0}\frac{u(x,t;\varepsilon)-u(x,t;0) }{\varepsilon_1},$$
	$$m^{(1)}:=\p_{\varepsilon_1}m|_{\varepsilon=0}=\lim\limits_{\varepsilon\to 0}\frac{m(x,t;\varepsilon)-m(x,t;0) }{\varepsilon_1}.$$
	Then we have 
	\begin{equation}\label{first-order-analytic}
		\begin{cases}
			-\p_t u_j^{(1)}-\beta\Delta u_j^{(1)}= F_j^{(1)}(x)m^{(1)}(x,t) ,  & \text{ in }  Q,\\
			\p_t m_j^{(1)}(x,t)-\beta\Delta m_j^{(1)}(x,t)=0, & \text{ in }  Q,\\
			\p_{\nu} u_j^{(1)}(x,t)=\p_{\nu}m_j^{(1)}(x,t)=0      &\text{ on } \Sigma,\medskip\\
			u_j^{(1)}(x,T)=f_1, & \text{ in } \Omega,\medskip\\
			m_j^{(1)}(x,0)=0, & \text{ in } \Omega.\\
		\end{cases}
	\end{equation}
	This implies that $m^{(1)}(x,t)=0.$ We define $m^{(2)}(x,t)$ as the solution for the second order linearisation system (see Section~\ref{analysis of lin}). Similarly, we have $m^{(2)}(x,t)=0.$ 			Therefore, $u_j^{(1)}(x,t)$ are independent of $j$. Let $u_1^{(1)}(x,t)=u_2^{(1)}(x,t)=u^{(1)}(x,t) $, then it satisfies the following system: 
	\begin{equation}\label{first order-analytic'}
		\begin{cases}
			-\p_t u^{(1)}(x,t)-\beta\Delta u^{(1)}(x,t)= 0 ,  & \text{ in }  Q,\\
			\p_{\nu} u^{(1)}(x,t)=0      &\text{ on } \Sigma,\medskip\\
			u^{(1)}(x,T)=f_1, & \text{ in } \Omega,\medskip\\
		\end{cases}
	\end{equation}
Similarly, $u_1^{(2)}(x,t)=u_2^{(2)}(x,t)=u^{(2)}(x,t) $ also satisfy $\eqref{first order-analytic'}$.
	Noticing that $m^{(1)}(x,t)= m^{(2)}(x,t)=0$,
	we have that the second order linearization system is given by (see Section~\ref{analysis of lin})
	\begin{equation}\label{second order-analytic}
		\begin{cases}
			-\p_t u_j^{(1,2)}-\beta\Delta u_j^{(1,2)}+\kappa_j(x)\nabla u^{(1)}\cdot\nabla u^{(2)}= F_j^{(1)}m^{(1,2)}(x,t) ,  & \text{ in }  Q,\\
			\p_t m_j^{(1,2)}(x,t)-\beta\Delta m_j^{(1,2)}(x,t)=0, & \text{ in }  Q,\\
			\p_{\nu} u_j^{(1,2)}(x,t)=\p_{\nu}m_j^{(1,2)}(x,t)=0      &\text{ on } \Sigma,\medskip\\
			u^{(1,2)}(x,T)=0, & \text{ in } \Omega,\medskip\\
			m^{(1,2)}(x,0)=0, & \text{ in } \Omega.\\
		\end{cases}
	\end{equation}
	
	Note that $m^{(1,2)}(x,t)$ must be $0$ and hence, 
	\begin{equation}
		-\p_t u^{(1,2)}-\beta\Delta u^{(1,2)}+\kappa_j(x)\nabla u^{(1)}\cdot\nabla u^{(2)}=0,
	\end{equation}
	holds if $u^{(1)}, u^{(2)}$ are solution of $\eqref{first order}$. Let $\overline{u}(x,t)=u_1^{(1,2)}(x,t)-u_2^{(1,2)}(x,t)$. Since 	$\mathcal{N}_{F_1,H_1}=\mathcal{N}_{F_2,H_2}$,  we have
	\begin{equation}\label{u_1-u_2'}
		\begin{cases}
			-\p_t \overline{u}-\beta\Delta \overline{u}+(\kappa_1(x)-\kappa_2(x))\nabla u^{(1)}\cdot\nabla u^{(2)}=0, & \text{ in }  Q,\\
			\p_{\nu}\overline{u}(x,t)=\overline{u}(x,t)=0&\text{ on } \Sigma,\medskip\\
			\overline{u}(x,T)=\overline{u}(x,0)=0, & \text{ in } \Omega.	
			\end{cases}
	\end{equation}
	Let $\omega$ be a solution of the following system
	\begin{equation}\label{adjoint'}
			\p_t \omega-\beta\Delta \omega=0   \text{ in }  Q.
	\end{equation}
	We multiply $\omega$ on the both sides of $\eqref{u_1-u_2'}$. Then integration by part implies that
	\begin{equation}\label{integral by part 1'}
		\int_{\Omega} (\kappa_1(x)-\kappa_2(x))\nabla u^{(1)}\cdot\nabla u^{(2)}\omega\, dx\,dt =0. 
	\end{equation}
	
	By Lemma $\ref{E-F is complete}$, there exists $\lambda\in\mathbb{R}$ and $g(x)\in C^{\infty}(\Omega)$ such that 
	$e^{\lambda t}g(x)$ satisfies  $\eqref{first order-analytic'}$.
	Let $f_1=e^{\lambda T}g(x)$. Then by the uniqueness of the solution of the heat equation, we have 
	$$  u^{{(1)}}(x,t)=e^{\lambda t}g(x). $$
	Then we have 
	\begin{equation}\label{integral by part 1'' }
		\int_{Q} (\kappa_1(x)-\kappa_2(x))e^{2\lambda t}|\nabla g(x)|^2\omega\, dx\,dt =0. 
	\end{equation}
	Consider  $\omega= e^{-\beta|\xi|^2t-\mathrm{i}x\cdot\xi}$ with $\mathrm{i}:=\sqrt{-1}$ for $\xi\in\mathbb{R}^n.$
	It follows that 
	$$\int_{0}^{T} e^{2\lambda t}e^{-|\xi|^2t }\int_{\Omega}(\kappa_1(x)-\kappa_2(x))|\nabla g(x)|^2 e^{-\mathrm{i}x\cdot\xi } =0,$$
	which readily yields 
	$$\int_{\Omega}(\kappa_1(x)-\kappa_2(x))|\nabla g(x)|^2 e^{-\mathrm{i}x\cdot\xi } =0.$$
	Therefore, we have $ (\kappa_1(x)-\kappa_2(x))|\nabla g(x)|^2=0$ in $\Omega.$ By the construction in Lemma $\ref{E-F is complete}$, we have
	
	$$ \kappa_1(x)-\kappa_2(x)=0. $$
	
	\noindent {\bf Step II.}	Let $\kappa=\kappa_1=\kappa_2$. Next, we derive that $F_1=F_2$. 
	 Consider the following system 	
	 \begin{equation}\label{MFG 1,2-analytic}
	 	\begin{cases}
	 		-u_t-\beta\Delta u+\frac{1}{2}\kappa(x)|\nabla  u|^2= F_j(x,m) & \text{ in } Q,\medskip\\
	 		m_t-\beta\Delta m-\div (m\kappa(x)\nabla u)=0  & \text{ in } Q, \medskip\\
	 		\p_{\nu}u(x,t)=\p_{\nu}m(x,t)=0     & \text{ on } \Sigma, \medskip\\
	 		u(x,T)=\psi(x)     & \text{ in } \Omega,\medskip\\
	 		m(x,0)=m_0(x) & \text{ in } \Omega.\\
	 	\end{cases}
	 \end{equation} 
	  Let $\psi(x)=0$ and 
	$$m_0(x;\varepsilon)=\sum_{l=1}^{N}\varepsilon_lf_l,$$
	where 
	\[
	f_l\in C^{2+\alpha}(\mathbb{R}^n)\quad\mbox{and}\quad f_l\geq 0,
	\]
	and $\varepsilon=(\varepsilon_1,\varepsilon_2,...,\varepsilon_N)\in\mathbb{R}_+^N$ with 
	$|\varepsilon|=\sum_{l=1}^{N}|\varepsilon_l|$ small enough. 
	First, we do the first order linearization to the MFG system \eqref{MFG 1,2-analytic} in $Q$ and can derive: 
	\begin{equation}\label{linearization}
		\begin{cases}
			-\p_{t}u^{(1)}_j-\beta\Delta u_j^{(1)}= F_j^{(1)}(x)m_j^{(1)} & \text{ in } Q, \medskip\\
			\p_{t}m^{(1)} _j-\beta\Delta m_j^{(1)} =0  & \text{ in } Q, \medskip\\
			\p_{\nu}u^{(1)}(x,t)=\p_{\nu}m^{(1)}(x,t)=0     & \text{ on } \Sigma, \medskip\\
			 u^{(1)}(x,T)=0   & \text{ in } \Omega,\medskip\\
			m^{(1)} _j(x,0)=f_1(x) & \text{ in } \Omega.\\
		\end{cases}
	\end{equation}
We choose $f_1(x)=1$. Then we have $m_1^{(1)}(x,t)=m_2^{(1)}(x,t)=1.$ Hence, we have that $u_j^{(1)}(x,t)$ is the solution of the following system:
	\begin{equation}\label{linearization-without-m}
	\begin{cases}
		-\p_{t}u^{(1)}_j-\beta\Delta u_j^{(1)}= F_j^{(1)}(x) & \text{ in } Q, \medskip\\
		\p_{\nu}u^{(1)}(x,t)=0     & \text{ on } \Sigma, \medskip\\
		u^{(1)}(x,T)=0   & \text{ in } \Omega. 
	\end{cases}
\end{equation}
Let $\overline{u}(x,t)=u_1^{(1)}(x,t)-u_2^{(1)}(x,t)$. Since  $\mathcal{N}_{F_1,H_1}=\mathcal{N}_{F_2,H_2} $, we have
\begin{equation}\label{u_1-u_2-analytic}
	\begin{cases}
		-\p_t \overline{u}(x,t)-\beta\Delta \overline{u}(x,t) = F_1^{(1)}(x)-F_2^{(1)}(x), & \text{ in }  Q,\\
		\p_{\nu}\overline{u}(x,t)=\overline{u}(x,t)=0&\text{ on } \Sigma,\medskip\\
		\overline{u}(x,T)=\overline{u}(x,0)=0, & \text{ in } \Omega.	
	\end{cases}
\end{equation}
Let $\omega$ be a solution of the following system
\begin{equation}\label{adjoint-anlaytic}
	\p_t \omega-\beta\Delta \omega=0   \text{ in }  Q. 	
\end{equation}
We multiply $\omega$ on the both side of $\eqref{u_1-u_2-analytic}$. Then integration by part implies that
\begin{equation}
	\int_{Q}  \,(F_1^{(1)}(x)-F_2^{(1)}(x))\omega(x,t)\,dx\,dt=0.
\end{equation}
	Consider  $\omega= e^{-\beta|\xi|^2t-\mathrm{i}x\cdot\xi}$  for $\xi\in\mathbb{R}^n.$
Similar to the argument in Step I, we have
\begin{equation}
	\int_{\Omega}  \,(F_1^{(1)}(x)-F_2^{(1)}(x))e^{\mathrm{i}\xi\cdot x}\,dx=0,
\end{equation}
for all $\xi\in\mathbb{R}^n.$ Hence, we have $F_1^{(1)}(x)=F_2^{(1)}(x)$.

\medskip

\noindent {\bf Step III.}
We proceed to consider the second linearization to the MFG system $\eqref{MFG 1,2-analytic}$ in $Q$ and can obtain for $j=1,2$:
	\begin{equation}
	\begin{cases}
		-\p_tu_j^{(1,2)}-\beta\Delta u_j^{(1,2)}(x,t)+\kappa(x)\nabla u_j^{(1)}\cdot \nabla u_j^{(2)}\medskip\\
		\hspace*{3cm}= F_j^{(1)}(x)m_j^{(1,2)}+F_j^{(2)}(x)m_j^{(1)}m_j^{(2)} & \text{ in } \Omega\times(0,T),\medskip\\
		\p_t m_j^{(1,2)}-\beta\Delta m_j^{(1,2)}= {\rm div} (m_j^{(1)}\kappa(x)\nabla u_j^{(2)})+{\rm div}(m_j^{(2)}\kappa(x)\nabla u_j^{(1)}) ,&\text{ in } \Omega\times (0,T) \medskip\\
		\p_{\nu}u^{(1,2)}(x,t)=\p_{\nu}m^{(1,2)}(x,t)=0     & \text{ on } \Sigma, \medskip\\
		u_j^{(1,2)}(x,T)=0 & \text{ in } \Omega,\medskip\\
		m_j^{(1,2)}(x,0)=0 & \text{ in } \Omega.\\
	\end{cases}  	
\end{equation}
Now we choose $f_1(x)=f_2(x)=1$. Then we have $$m_j^{(1)}(x,t)=m_j^{(2)}(x,t)=1.$$
Notice that we have shown that $F_1^{(1)}(x)=F_2^{(1)}(x)$ in $\Omega$ and this implies that
$$ m^{(1,2)}(x,t)=m_1^{(1,2)}(x,t)=m_2^{(1,2)}(x,t). $$

Let $\hat{u}(x,t)=u_1^{(1,2)}(x,t)-u_2^{(1,2)}(x,t)$. Since  $\mathcal{N}_{\kappa_1,F_1}=\mathcal{N}_{\kappa_2,F_2} $, we have
\begin{equation}\label{u_1-u_2-analytic2}
	\begin{cases}
		-\p_t \hat{u}(x,t)-\beta\Delta \hat{u}(x,t) =( F_1^{(2)}(x)-F_2^{(2)}(x)) , & \text{ in }  Q,\\ 
		\p_{\nu}\hat{u}(x,t)=\hat{u}(x,t)=0, &\text{ on } \Sigma,\medskip\\
		\hat{u}(x,T)=\hat{u}(x,0)=0, & \text{ in } \Omega.	
	\end{cases}
\end{equation}
Let $\omega$ be a solution of the following system
\begin{equation}\label{adjoint-anlaytic2}
	\p_t \omega-\beta\Delta \omega=0   \text{ in }  Q.
\end{equation}
We multiply $\omega$ on the both side of $\eqref{u_1-u_2-analytic2}$. Then integration by part implies that
\begin{equation}
	\int_{Q}  \,(F_1^{(2)}(x)-F_2^{(2)}(x))\omega(x,t)\,dx\,dt=0.
\end{equation}
Similar to the argument in Step II, we can show that
$$F_1^{(2)}(x)-F_2^{(2)}(x)=0.$$
\noindent {\bf Step IV.}
Finally, using mathematical induction and repeating similar arguments in Steps II and III, one can show that
$$F^{(k)}_1(x)-F^{(k)}_2(x)=0 ,$$
for all $k\in\mathbb{N}$. Hence, $$F_1(x,z)=F_2(x,z)\quad \mbox{in}\ \ \Omega\times\mathbb{R}.$$

The proof is complete. 

	\end{proof}

Next we present the proof of Theorem $\ref{der F2}$. Recall that in this case, we have 
$$ F(x,m)= \int_{\Omega}K(x,y)m(y,t)\, dy.$$
\begin{proof}[ Proof of Theorem $\ref{der F2}$ ]
	For $j=1,2$, let us consider 
	\begin{equation}\label{MFG 1,2}
		\begin{cases}
			-u_t-\Delta u+\frac{1}{2}\kappa_j|\nabla  u|^2= \int_{\Omega}K_j(x,y)m(y,t)dy & \text{ in } Q,\medskip\\
			m_t-\Delta m-\div (m\kappa_j\nabla u)=0  & \text{ in } Q, \medskip\\
			\p_{\nu}u(x,t)=\p_{\nu}m(x,t)=0     & \text{ on } \Sigma, \medskip\\
			u(x,T)=\psi(x)     & \text{ in } \Omega,\medskip\\
			m(x,0)=m_0(x) & \text{ in } \Omega.\\
		\end{cases}
	\end{equation}
	Next, we divide our proof into two steps. 
	
	\medskip
	
	\noindent {\bf Step I.} We first prove that $\kappa_1=\kappa_2$.
	
			Let $\psi(x)=\varepsilon_1f_1+\varepsilon_2f_2$, $m_0= 0.$ 
			Set
			$$u^{(1)}:=\p_{\varepsilon_1}u|_{\varepsilon=0}=\lim\limits_{\varepsilon\to 0}\frac{u(x,t;\varepsilon)-u(x,t;0) }{\varepsilon_1},$$
			$$m^{(1)}:=\p_{\varepsilon_1}m|_{\varepsilon=0}=\lim\limits_{\varepsilon\to 0}\frac{m(x,t;\varepsilon)-m(x,t;0) }{\varepsilon_1}.$$
			By straightforward calculations, one can show that 
			\begin{equation}\label{first order}
				\begin{cases}
					-\p_t u_j^{(1)}-\beta\Delta u_j^{(1)}= \int_{\Omega}K_j(x,y)m^{(1)}(y,t)dy ,  & \text{ in }  Q,\\
					\p_t m_j^{(1)}(x,t)-\beta\Delta m_j^{(1)}(x,t)=0, & \text{ in }  Q,\\
					\p_{\nu} u_j^{(1)}(x,t)=\p_{\nu}m_j^{(1)}(x,t)=0      &\text{ on } \Sigma,\medskip\\
					u_j^{(1)}(x,T)=f_1, & \text{ in } \Omega,\medskip\\
					m_j^{(1)}(x,0)=0, & \text{ in } \Omega.\\
				\end{cases}
			\end{equation}
			This implies that $m^{(1)}(x,t)=0.$ We define $m^{(2)}(x,t)$ in the way as defined in Section~\ref{analysis of lin}). In a similar way, we can show that $m^{(2)}(x,t)=0.$ 			Therefore, $u_j^{(1)}(x,t)$ are independent of $j$. Let $u_1^{(1)}(x,t)=u_2^{(1)}(x,t)=u^{(1)}(x,t) $. One can readily see that it satisfies the following system:
			\begin{equation}\label{first order'}
				\begin{cases}
					-\p_t u^{(1)}(x,t)-\beta\Delta u^{(1)}(x,t)= 0 ,  & \text{ in }  Q,\\
					\p_{\nu} u^{(1)}(x,t)=0      &\text{ on } \Sigma,\medskip\\
					u^{(1)}(x,T)=f_1, & \text{ in } \Omega,\medskip\\
				\end{cases}
			\end{equation}
			
			By direct calculations, we have that the second order linearization system is given by
			\begin{equation}\label{second order}
				\begin{cases}
					-\p_t u_j^{(1,2)}-\beta\Delta u_j^{(1,2)}+\kappa_j(x)\nabla u^{(1)}\cdot\nabla u^{(2)}= \int_{\Omega}K_j(x,y)m^{(1,2)}(y,t)dy ,  & \text{ in }  Q,\\
					\p_t m_j^{(1,2)}(x,t)-\beta\Delta m_j^{(1,2)}(x,t)=0, & \text{ in }  Q,\\
					\p_{\nu} u_j^{(1,2)}(x,t)=\p_{\nu}m_j^{(1,2)}(x,t)=0      &\text{ on } \Sigma,\medskip\\
					u^{(1,2)}(x,T)=0, & \text{ in } \Omega,\medskip\\
					m^{(1,2)}(x,0)=0, & \text{ in } \Omega.\\
				\end{cases}
			\end{equation}
			Note that $m^{(1,2)}(x,t)$ must be $0$ and hence it holds that
			\begin{equation}
				-\p_t u^{(1,2)}-\beta\Delta u^{(1,2)}+\kappa_j(x)\nabla u^{(1)}\cdot\nabla u^{(2)}=0,
			\end{equation}
			provided that $u^{(1)}, u^{(2)}$ are solutions to $\eqref{first order}$. Let $\overline{u}=u_1^{(1,2)}(x,t)-u_2^{(1,2)}(x,t)$. Since 	$\mathcal{N}_{F_1,H_1}=\mathcal{N}_{F_2,H_2}$,  we have
			\begin{equation}\label{u_1-u_2}
				\begin{cases}
					-\p_t \overline{u}-\beta\Delta \overline{u}+(\kappa_1(x)-\kappa_2(x))\nabla u^{(1)}\cdot\nabla u^{(2)}=0, & \text{ in }  Q,\\
					\p_{\nu}\overline{u}(x,t)=\overline{u}(x,t)=0&\text{ on } \Sigma,\medskip\\
					\overline{u}(x,T)=0, & \text{ in } \Omega.
				\end{cases}
			\end{equation}
			Let $\omega$ be a solution of the following system
			\begin{equation}\label{adjoint}
					\p_t \omega-\beta\Delta \omega=0,  \text{ in }  Q. 
			\end{equation}
			Multiplying $\omega$ on the both sides of $\eqref{u_1-u_2}$ and then integration by part yield that
			\begin{equation}\label{integral by part 1}
				\int_{\Omega} (\kappa_1(x)-\kappa_2(x))\nabla u^{(1)}\cdot\nabla u^{(2)}\omega\, dx\,dt =0. 
			\end{equation}
			
			By Lemma $\ref{E-F is complete}$, there exists $\lambda\in\mathbb{R}$ and $g(x)\in C^{\infty}(\Omega)$ such that 
			$e^{\lambda t}g(x)$ satisfies  $\eqref{first order'}$.
			Let $f_1=e^{\lambda T}g(x)$. Then by the uniqueness of the solution of the heat equation, we have 
			$$  u^{{(1)}}(x,t)=e^{\lambda t}g(x). $$

		Consider  $\omega= e^{-\beta|\xi|^2t-\mathrm{i}x\cdot\xi}$  for $\xi\in\mathbb{R}^n.$
		By a similar argument to that in the proof of Theorem $\ref{der F}$, we have
			$$\int_{0}^{T} e^{2\lambda t}e^{-\beta|\xi|^2t }\int_{\Omega}(\kappa_1(x)-\kappa_2(x))|\nabla g(x)|^2 e^{-\mathrm{i}x\cdot\xi } =0,$$
			i.e.
			$$\int_{\Omega}(\kappa_1(x)-\kappa_2(x))|\nabla g(x)|^2 e^{-\mathrm{i}x\cdot\xi } =0.$$
			Therefore, we have $ (\kappa_1(x)-\kappa_2(x))|\nabla g(x)|^2=0$ in $\Omega.$ By the construction in Lemma $\ref{E-F is complete}$, we have
			
			$$ \kappa_1(x)-\kappa_2(x)=0. $$
			
			\medskip
			
		\noindent {\bf Step II.}	Let $\kappa=\kappa_1=\kappa_2$. Next, we show that $K_1(x,y)=K_2(x,y)$. To that end, we need to make use of the linearisation method presented in Section~\ref{sect:hvm}. Let 
			$m_0=\varepsilon g_1 +\varepsilon^2 g_2$, where $g_1>0, \varepsilon>0 $. Let $\psi(x)=0.$ Then the first order linearization system is given by
			
			\begin{equation}\label{first order type2}
				\begin{cases}
					-\p_t u_j^{(I)}-\beta\Delta u_j^{(I)}= \int_{\Omega}K_j(x,y)m^{(I)}(y,t)dy ,  & \text{ in }  Q,\\
					\p_t m_j^{(I)}(x,t)-\beta\Delta m_j^{(I)}(x,t)=0, & \text{ in }  Q,\\
					\p_{\nu} u_j^{(I)}(x,t)=\p_{\nu}m_j^{(I)}(x,t)=0      &\text{ on } \Sigma,\medskip\\
					u_j^{(I)}(x,T)=0, & \text{ in } \Omega,\medskip\\
					m_j^{(I)}(x,0)=g_1, & \text{ in } \Omega.\\
				\end{cases}	
			\end{equation}
			
			Let $g_1(x)=1$. Since $\int_{\Omega} K_j(x,y) dy=0$ for $j=1,2$, we have $u_j^{(I)}(x,t)=0, m_j^{(I)}(x,t)=1.$ Then the second order linearization system is given by
			\begin{equation}\label{second order type2}
				\begin{cases}
					-\p_t u_j^{(II)}-\beta\Delta u_j^{(II)}+\kappa(x)|\nabla u^{(I)}|^2= \int_{\Omega}K_j(x,y)m^{(II)}(y,t)dy ,  & \text{ in }  Q,\\
					\p_t m_j^{(II)}(x,t)-\beta\Delta m_j^{(II)}(x,t)=2\div(m^{(I)}\kappa\nabla u^{(I)}), & \text{ in }  Q,\\
					\p_{\nu} u_j^{(II)}(x,t)=\p_{\nu}m_j^{(II)}(x,t)=0      &\text{ on } \Sigma,\medskip\\
					u_j^{(II)}(x,T)=0, & \text{ in } \Omega,\medskip\\
					m_j^{(II)}(x,0)=2g_2, & \text{ in } \Omega.\\
				\end{cases}	
			\end{equation}
			Define $\hat{K}=K_1-K_2$. Let $\omega$ be a solution of $\eqref{adjoint}$. By a similar argument to that in Step~I, we can derive 
			\begin{equation}
				\int_{Q}\big[ \int_{\Omega}  \hat{K}(x,y) m^{(2)}(y,t)  dy \big]\omega(x,t)\,dx\,dt=0,
			\end{equation}
			for all $ m^{(II)}(y,t) $ being solutions of
			\begin{equation}
				\begin{cases}
					\p_t m_j^{(II)}(x,t)-\beta\Delta m_j^{(II)}(x,t)=0, & \text{ in }  Q,\\
					\p_{\nu}m_j^{(II)}(x,t)=0      &\text{ on } \Sigma,\medskip\\
					m_j^{(II)}(x,0)=2g_2, & \text{ in } \Omega.\\
				\end{cases}	
			\end{equation}
			By Lemma $\ref{E-F is complete}$, we choose $m^{(II)}(x,t)=e^{\lambda t}g(x;\lambda)$. Then by a similar argument as above, we can deduce that
			\begin{equation}\label{eq:aa1}
				\int_{0}^{T}e^{\lambda t}e^{-|\xi|^2t}dt\int_{\Omega}\big[ \int_{\Omega}  \hat{K}(x,y) g(y;\lambda)  dy \big] e^{-\mathrm{i}x\cdot\xi}\,dx=0. 
			\end{equation}
			From \eqref{eq:aa1}, we clearly have that
			\begin{equation}\label{eq:aa2} 
			\int_{\Omega}  \hat{K}(x,y) g(y;\lambda)  dy=0\quad \mbox{for all $g(y;\lambda)$}.
			\end{equation}
			Note that $g(y;\lambda)$ can be any Neumann eigenfunction of $-\Delta$ in $\Omega$. Since the Neumann-Laplacian eigenfunctions form a complete set in $L^2(\Omega)$, we readily derive from \eqref{eq:aa2} that
			$$K_1(x,y)=K_2(x,y)\ \ \mbox{in}\ \Omega\times\Omega.$$
			
			 The proof is complete.
		\end{proof}
		
	\begin{rmk}
	We would like to point out that by a closer inspection of the proofs of Theorems~\ref{der F} and \ref{der F2}, it can easily inferred that in order to recover $F$ and $H$, one does not need the full knowledge of $\mathcal{N}_{F,H}(m_0,\psi)$. In fact, one can only use $\mathcal{N}_{F,H}(m_0,0)$ and $\mathcal{N}_{F,H}(0,\psi)$ to recover $F$ and $\kappa(x), respectively.$
\end{rmk}

\subsection{Comparison and discussion of the two linearisation methods}

In order to tackle the MFG inverse problems, we propose two linearisation strategies in Section~\ref{analysis of lin}, which are respectively termed as the high-order linearisation (cf. Section~\ref{HLM}) and high-order variation (cf. Section~\ref{sect:hvm}). In this section, we would like to give a succinct comparison of the two methods. Our point is that the high-order variation method is superior to the high-order linearisation method when addressing the MFG inverse problem with the running cost $F$ depending on the probability measure $m$ non-locally.

In fact, if the running cost $F_j (j=1,2)$ depends on the $m$ locally, both methods works in the proofs of Theorems~\ref{der F} and \ref{der F2}, and we leave the verification to readers. However, if we consider the case that $F_j$ depends on $m$ non-locally, we next show that the high-order linearisation method in general does not work to recover $F$.
First, it is noted that we can always recover $\kappa$ by the high-order linearisation argument. Hence, we can assume that $\kappa_1=\kappa_2$ and consider the linearzation systems obtained by the  high-order linearization method in the non-local case:
\begin{equation}\label{com-1}
		\begin{cases}
		-\p_t u^{(1)}-\beta\Delta u^{(1)}= \int_{\Omega}K_j(x,y)m^{(1)}(y,t)dy ,  & \text{ in }  Q,\\
		\p_t m^{(1)}(x,t)-\beta\Delta m^{(1)}(x,t)=0, & \text{ in }  Q,\\
		\p_{\nu} u^{(1)}(x,t)=\p_{\nu}m^{(1)}(x,t)=0      &\text{ on } \Sigma,\medskip\\
		u^{(1)}(x,T)=0, & \text{ in } \Omega,\medskip\\
		m^{(1)}(x,0)=g_1, & \text{ in } \Omega,\\
	\end{cases}
\end{equation}
and
\begin{equation}\label{com-2}
		\begin{cases}
		-\p_tu^{(1,2)}-\beta\Delta u^{(1,2)}+\kappa(x)\nabla u^{(1)}\cdot \nabla u^{(2)}
		= \int_{\Omega}K_j(x,y)m^{(1,2)}(y,t)dy,& \text{ in } \Omega\times(0,T),\medskip\\
		\p_t m^{(1,2)}-\beta\Delta m^{(1,2)}= {\rm div} (m^{(1)}\kappa(x)\nabla u^{(2)})+{\rm div}(m^{(2)}\kappa(x)\nabla u^{(1)}) ,&\text{ in } \Omega\times (0,T),\medskip\\
		\p_{\nu} u^{(1,2)}(x,t)=\p_{\nu}m^{(1,2)}(x,t)=0      &\text{ on } \Sigma,\medskip\\
		u^{(1,2)}(x,T)=0,& \text{ in } \Omega,\medskip\\
		m^{(1,2)}(x,0)=0, & \text{ in } \Omega.\\
	\end{cases}  	
\end{equation}
Similarly, we can derive the equations for $(u^{(i)}(x,t),m^{(i)}(x,t))$ $(i=1,2,....,N)$.

Notice that we cannot control the second order linearzation systems directly because $\eqref{com-2}$ only depends on the solutions of the first order linearzation systems. 
Recall that we have the probability density constraint. Since $m_0=\sum_{l=1}^{N}\varepsilon_ig_i$, $g_i(x)$ are required to be non-negative. Therefore, one cannot recover $K$ by the high-order linearisation method. Even though we can obtain a sequence of first-order linearzation systems by this method, all initial values are required to be non-negative. For comparison, in the high-order variation method, we only obtain one first order linearzation system that is used to fulfill the positivity constraint, but we can recover the unknown from the second order linearzation system which has no positivity constraint. Hence, in the non-local case, the high-order variation method is more advantageous. Indeed, if one makes use of the high-order linearisation method in such a case, one has to utilise both the first and second order linearisation systems in order to recover $F$ (because the information of the integral kernel $K$ is contained in both systems), and one will suffer from the positivity constraint of the first order linearisation system. 

\section*{Acknowledgments}

 The work was supported by the Hong Kong RGC General Research Funds (projects 11311122, 11300821 and 12301420),  the NSFC/RGC Joint Research Fund (project N\_CityU101/21), and the ANR/RGC Joint Research Grant, A\_CityU203/19.

\end{document}